\documentclass[review,onefignum,onetabnum]{siamart171218}

\usepackage{lipsum}
\usepackage{amsfonts}
\usepackage{graphicx}
\usepackage{epstopdf}
\usepackage{algorithmic}
\ifpdf
  \DeclareGraphicsExtensions{.eps,.pdf,.png,.jpg}
\else
  \DeclareGraphicsExtensions{.eps}
\fi

\newsiamremark{remark}{Remark}
\newsiamremark{hypothesis}{Hypothesis}
\crefname{hypothesis}{Hypothesis}{Hypotheses}
\newsiamthm{claim}{Claim}

\headers{Perturbation Theory in Tangent Space}{F. Hrobat, Y. Nakatsukasa}

\title{
Matrix Perturbation Theory in the Tangent Space of Isospectral Matrices
\thanks{Version of \today.
\funding{YN is supported by EPSRC grants EP/Y010086/1 and EP/Y030990/1. FH is supported by Mathematical Institute Scholarship.}}}

\author{Francesco Hrobat\thanks{Mathematical Institute, University of Oxford, OX2 6GG, UK
  (\email{francesco.hrobat@maths.ox.ac.uk}, 
  \email{yuji.nakatsukasa@maths.ox.ac.uk}).}
\and Yuji Nakatsukasa\samethanks
}

\usepackage{amsopn}

\makeatletter
\newcommand*{\addFileDependency}[1]{
  \typeout{(#1)}
  \@addtofilelist{#1}
  \IfFileExists{#1}{}{\typeout{No file #1.}}
}
\makeatother

\ifpdf
\hypersetup{
  pdftitle={
Matrix Perturbation Theory in the Tangent Space of Isospectral Matrices
  },
  pdfauthor={Francesco Hrobat and Yuji Nakatsukasa}
}
\fi

\usepackage{amsmath}
\usepackage{amssymb}
\usepackage{comment}
\usepackage{framed}
\usepackage{mathrsfs}
\usepackage{tikz}
\usepackage{pgfplots}
\pgfplotsset{compat=1.18}
\usepgfplotslibrary{groupplots}
\usepackage{float}

\newcommand{\ignore}[1]{}
\newcommand{\C}{\mathbb{C}}
\newcommand{\R}{\mathbb{R}}
\newcommand*\samethanks[1][\value{footnote}]{\footnotemark[#1]}

\begin{document}

\maketitle

\begin{abstract}
Eigenvalue and eigenvector perturbation theory is a fundamental topic in several disciplines, including numerical linear algebra, quantum physics, and related fields. The central problem is to understand how the eigenvalues and eigenvectors of a matrix $A \in \mathbb{C}^{n \times n}$ change under the addition of a perturbation matrix $E \in \mathbb{C}^{n \times n}$.
Much of the existing literature focuses on structured perturbations. For example, in 
[C.-K. Li and R.-C. Li, Linear Algebra Appl. 2005], 
the matrix $A$ is assumed to be Hermitian and block diagonal, while the perturbation $E$ is Hermitian and block off-diagonal. In this work, we investigate a different structured 
setting in which the perturbation has the commutator form $E = AB - BA$ for some matrix $B$, which we show to be a generalization of the block diagonal structure considered by Li and Li. 
First, we extend their main result by showing that the perturbation of the $i$-th eigenvalue of $A$, denoted by $\lambda_i$, is of order $\|E\|^2 / \eta_i$, where $\eta_i = \min_{j \neq i} |\lambda_i - \lambda_j|$ is the spectral gap associated with $\lambda_i$.
Second, we provide a detailed analysis of the role played by the matrix $B$ in the perturbation of the eigenvectors. This analysis is further generalized to the case of block-diagonal matrices with multiple eigenvalues, as well as to perturbed singular values and eigenvalues of Jordan blocks.

\end{abstract}

\begin{keywords}
Perturbation, tangent space, isospectral flow, higher-order perturbation bounds, eigenvalues, eigenvectors, singular values
\end{keywords}

\begin{AMS}
  15A22, 15A42, 65F15
\end{AMS}
\section{Introduction}

Matrix perturbation theory is a classical and central topic in a wide range of fields, including numerical linear algebra \cite{Haa2025}, quantum physics \cite{Das2002} and dynamical systems \cite{Ash2012}. A fundamental problem concerns the stability of spectral quantities under perturbations: given a matrix
$A \in \mathbb{C}^{n \times n}$ and a perturbation
$E \in \mathbb{C}^{n \times n}$, how do the eigenvalues and eigenvectors of $A+E$ deviate from those of $A$?

For eigenvalues, an extensive theory is available. Classical results such as Weyl’s theorem \cite{Wey1912}, the Bauer-Fike theorem \cite{Bau1960}, and related bounds \cite[\S IV]{Ste1990} \cite[\S III]{Bha2012} show that, for a normal matrix $A$,
\[
|\lambda_i(A+E) - \lambda_i(A)| \le \|E\|,
\]
where $\|\cdot\|$ denotes the spectral norm. This bound is sharp in general and cannot be improved without additional assumptions, as is already evident in the scalar case.

Such bounds are simple and only depend on the norm of the perturbation. The major drawback of using these bounds is that they do not exploit possible structural properties of $E$. For this reason, numerous works have analyzed the case of structured $E$ and better bounds had been produced. 
A notable example is provided by Li and Li \cite{Li2005}, who consider Hermitian matrices of the form
\[
A =
\begin{bmatrix}
A_1 & 0 \\
0   & A_2
\end{bmatrix},
\qquad
E =
\begin{bmatrix}
0 & \widetilde{E} \\
\widetilde{E}^* & 0
\end{bmatrix}.
\]
In this setting, eigenvalue perturbations are shown to be of second order in $\|E\|$, namely
\[
|\lambda_i(A+E) - \lambda_i(A)| \le \frac{\|E\|^2}{\eta},
\]
where
\[
\eta = \min_{\lambda \in \Lambda(A_1),\ \mu \in \Lambda(A_2)} |\lambda - \mu|
\]
denotes the spectral gap between the two diagonal blocks. This result highlights the crucial role of spectral separation and shows that structured perturbations may lead to substantially improvement in eigenvalue stability. This principle has also been exploited in subsequent works \cite{Haa2025, Laz2025}.

In contrast, analogous results for eigenvectors are considerably more delicate \cite{Dav1970,stewart2}. Eigenvectors are inherently less stable, particularly when the spectral gap is comparable to the perturbation magnitude. For instance, consider the diagonal matrix
\[
\begin{bmatrix}
1 & 0 \\
0 & 1+\varepsilon
\end{bmatrix}
\]
where $\varepsilon$ is thought as small, and its perturbed counterpart
\[
\begin{bmatrix}
1 & \varepsilon \\
\varepsilon & 1+\varepsilon
\end{bmatrix}.
\]
Although the perturbation has norm $\mathcal{O}(\varepsilon)$ and the eigenvalues deviate by the same order, the associated eigenvectors rotate by an $\mathcal{O}(1)$ angle.
Despite this difficulty, eigenvector information is often more powerful than eigenvalue estimates. Indeed, approximations of left and right eigenvectors yield accurate eigenvalue estimates via the Rayleigh quotient \cite{Bha2012}.

In this work, we provide an extensive analysis on perturbation of eigenvalues and eigenvectors when the matrix $E$ has the particular structure
\[ E = AB-BA\]
where $B \in \C^{n \times n}$. In other words, $E$ has the form of a \textit{commutator}
\[ E = [A,B] := AB-BA\]
involving the matrix $A$ itself and another matrix $B$.
By considering this structure, we have a twofold objective: first, we generalize the result by Li and Li \cite{Li2005} to this kind of perturbation, which is a more general structure than block off-diagonal (see \Cref{sec:2x2}).

Secondly, we show the important role that the matrix $B$ plays in the perturbation theory of eigenvalues and, more importantly, of eigenvectors, providing a strong connection with classical results. We also show how some existing quantities in perturbation theory greatly simplify if written in terms of the matrix $B$. This can already be appreciated if we look at the asymptotic expansions (see also \Cref{sub:preli}): letting $x_i,y_i$ denote the right and left eigenvectors of the matrix $A$ corresponding to the eigenvalue $\lambda_i(A)$, we show that the asymptotic expansion of the perturbed eigenvalue $\lambda_i(A+E)$ is

\[ \lambda_i(A+E) = \lambda_i(A) + y_i^*BEx_i - y_i^*BEBx_i + \mathcal{O}(\|E\|^4)\]
where 
\[ \eta_i = \min_{i \neq j}|\lambda_i(A) - \lambda_j(A)|\]
is the spectral gap relative to the $i$-th eigenvalue.
Even simpler, the asymptotic expansions of the perturbed eigenvectors $x_i(A+E)$ and $y_i(A+E)$, respectively the perturbation of $x_i$ and $y_i$, are 

\[ x_i(A+E) = x_i -Bx_i+\mathcal{O}(\|E\|^2) \quad \text{and} \quad y_i(A+E) = y_i +B^*y_i+\mathcal{O}(\|E\|^2).\]
Note that these asymptotic expansions do not require any information about eigenvalues or eigenvectors different from $y_i$, $x_i$ or $\lambda_i(A)$, because this information is inherently encoded inside the matrix $B$. Besides asymptotic expansions, we provide non-asymptotic bounds, often more useful in fields such as numerical linear algebra.

Not all matrices can be written as a commutator: in fact, given the matrix $E$, the matrix $B$ exists if and only if $E$ belongs to the \textit{tangent space of the isospectral manifold} to which $A$ belongs. 
This idea appears in previous works dealing with isospectral flows. One foundational work is the one by Lax \cite{Lax1968} in the setting of matrix ODEs of the form

\begin{equation}\label{eq:laxeq}
\frac{d}{dt}A(t) = [A(t),B(t)].
\end{equation}
In this setting, $B(t)$ plays the role of our matrix $B$. 
One of the main results of Lax's paper is that $A(t)$ is isospectral to $A(0)$ for all $t$.
This can be clearly seen if $B(t) \equiv B$ is constant as
\[ A(t) = e^{-tB}A(0)e^{tB}.\]

Isospectral flows are employed in different settings: the continuous version of $QR$ algorithm, the \textit{$QR$ flow} \cite{Chu2008} takes the form of
\[ \dot{A} = [A, B(A)]\]
where
\[ B(A) = A_{-} - A_{-}^*\]
where $A_{-}$ is the strictly lower triangular part of $A$. The Brockett-Wegner flow \cite[\S 7]{Chu2008} 
\[ \dot{A} = [A,[A,N]]\]
where $N$ is diagonal. Additionally, Lax flows appear in Riemannian optimization over the isospectral manifold \cite{Chu1990}.
On top of these, isospectral flows appear also in physics, for example in Heisenberg equations of motions \cite[\S 2.2.3]{Sak2021}.

Equations of the form \Cref{eq:laxeq} where $A(t)$ is Hermitian are usually solved considering the Lie group splitting \cite{Wat1984}
\begin{align*}
\begin{split}
\mathcal{M} &\iff U(n) \\
A(t) &\iff Q(t)
\end{split}
\end{align*}
where $\mathcal{M}$ is the isospectral manifold and $A(t) = Q(t)D_0Q(t)^*$ is the eigenvalue decomposition. In this case, $Q(t)$ satisfies

\[ \dot{Q}(t) = -B(t)Q(t).\]
This shows the close connection between the matrix $B$ and the perturbed eigenvector matrix.

The remainder of this paper is organized as follows: in \Cref{sec:descrtang} we describe the tangent space of isospectral matrices. In \Cref{sec:eigen} we provide the main results and the non-asymptotic bounds.
To have a complete picture, in \Cref{sec:gener} we will also analyse the case where $E$ is not on the tangent by splitting $E$ as the sum of a matrix in the tangent and another matrix orthogonal to the tangent. In this section, we show that we can construct the matrix $B$ by looking at the projection of the perturbation onto the tangent and how $B$ also plays an important role in this case.
In \Cref{sec:2x2} we deal with the block-diagonal case, which explicitly includes the Li and Li setting. This is needed if the eigenvalues are not simple or if we have clusters of eigenvalues.
In \Cref{sec:hermi} we refine previous results under the hypothesis of Hermitian matrices.
In \Cref{sec:svd} we transpose the same results for the SVD. In this second setting, the manifold will consist of matrices with the same singular values. This manifold has no particular widely-adopted name. The characterization of the tangent space is the following: $E$ belongs to the tangent space, if and only if $E$ can be written as
\[ E = S_1A + AS_2\]
where $S_1,S_2$ are skew-symmetric matrices of suitable dimensions ($A$ may be rectangular).
Finally, in \Cref{sec:jordan} how this setting can be extended to Jordan blocks.

\section{Description of the tangent space of isospectral matrices}\label{sec:descrtang}
Given a diagonalizable matrix $A \in \C^{n \times n}$, we are interested in studying the perturbation of the eigenvalues under the additive perturbation $E \in \C^{n \times n}$ in the tangent space of the isospectral manifold. First of all, we provide an informal argument to justify that $E$ is in the tangent if and only if 
\[ E = AB-BA.\]
A rigorous proof for the general setting of Lie groups can be found in \cite{Hal2015}.

Unless otherwise stated, we will always assume that all eigenvalues of $A$ are simple. Consider the eigenvalue decomposition $A =XDX^{-1}$. The isospectral manifold can be parametrized as 
\begin{align}\label{eq:manifoldparam}
\begin{split}
GL(n, \C)/D^{\times} &\longrightarrow \C^{n \times n}\\
 W &\longrightarrow WDW^{-1}
\end{split} 
\end{align}
where $GL(n, \C)/D^{\times}$ is the quotient of the space of invertible matrices by the following equivalence relation
\[ W_1 \sim W_2 \text{\quad if and only if \quad} W_1 = W_2D\]
where $D$ is an invertible diagonal matrix. This is because the eigenvector matrix is unique up to rescaling the eigenvectors. 
If we are considering only Hermitian matrices, we consider the parametrization
\begin{align*}
\begin{split}
U(n)/T^n &\longrightarrow \C^{n \times n}\\
 Q &\longrightarrow QDQ^*
\end{split} 
\end{align*}
where $U(n)/T^n$ is the quotient of the unitary matrices by unitary diagonal matrices. 

In order to identify the tangent space, we apply a small perturbation $P$ to $W$
\[ W+P \longrightarrow (W+P)D(W+P)^{-1}.\]
Assuming that $P$ is small enough, we can expand $(W+P)D(W+P)^{-1}$ to the first order 
\begin{align*}
&(W+P)D(W+P)^{-1} = W(I+W^{-1}P)D(I +W^{-1}P)^{-1}W^{-1} \\
&\approx W(I +W^{-1}P)D(I-W^{-1}P)W^{-1}  = A + A(-PW^{-1}) - (-PW^{-1})A.    
\end{align*}
As $P$ is generic, it follows that the tangent space is parametrized by
\[ \C^{n \times n} \longrightarrow \C^{n \times n}\]
\[ B \longrightarrow AB-BA = [A,B].\]
Furthermore, we have already seen that this implies that the perturbation of the eigenvector matrix is equal to
\[ P = -BW.\]
Note that the matrix $WDW^{-1}$ in \Cref{eq:manifoldparam} is identified by $n^2-n$ parameters, as the column of $W$ can be rescaled (i.e., the degrees of freedom of $GL(n, \C)/D^{\times}$ are $n^2 -n$). This means that the matrix $AB-BA$ is also identified by $n^2-n$ parameters. Define $x_i$ as the $i$-th column of $X$ and $y_i$ the $i$-th column of $X^{-*}$. It holds that
\[ y_i^*(AB-BA)x_i = e_i^*DXBx_i - y_i^*BX^{-1}De_i = \lambda_i(e_i^*XBX^{-1}e_i- e_i^*XBX^{-1}e_i) = 0.\]
As expected, we have $n$ constraints. Furthermore, we have the following characterization of the tangent space

\begin{theorem}[Another characterization of the tangent space]\label{thm:chartang}

Let $A \in \C^{n \times n}$ be a diagonalizable matrix. Let $\lambda_i$ be the $i$-th eigenvalues of $A$ and let $x_i,y_i \in \C^{n}$ the right and left eigenvectors relative to $\lambda_i$. Then $E \in \C^{n \times n}$ belongs to the tangent space of the isospectral manifold if and only if

\[ y_i^*Ex_i = 0, \qquad i= 1,\dots ,n.\]
    
\end{theorem}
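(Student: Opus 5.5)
The plan is to work in the eigenbasis of $A$, where the commutator becomes entrywise and the statement reduces to a question about which entries of $[D,C]$ can be prescribed. We take the tangent space to be $\{[A,B] : B\in\C^{n\times n}\}$, as derived above. Write $A = XDX^{-1}$ with $D=\mathrm{diag}(\lambda_1,\dots,\lambda_n)$ and $Y = X^{-*}$, so that $Y^*X = I$ and $y_i^*x_j=\delta_{ij}$.

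For any $E$, set $\widetilde E = X^{-1}EX = Y^*EX$; its entries are $\widetilde E_{ij} = y_i^*Ex_j$. Likewise, for any $B$ set $C = X^{-1}BX$. Then
\[ X^{-1}(AB-BA)X = DC - CD, \qquad (DC-CD)_{ij} = (\lambda_i-\lambda_j)C_{ij}. \]
Since $B \mapsto X^{-1}BX$ is a bijection of $\C^{n\times n}$, the statement becomes: $\widetilde E = DC-CD$ for some $C$ if and only if $\widetilde E_{ii}=0$ for all $i$.

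The forward direction is the computation already carried out before the statement: the diagonal of $DC-CD$ is $(\lambda_i-\lambda_i)C_{ii}=0$, so $y_i^*Ex_i=0$.

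For the converse, assume $\widetilde E_{ii}=0$ for all $i$. Define $C$ entrywise by
\[ C_{ij} = \frac{\widetilde E_{ij}}{\lambda_i-\lambda_j} \quad (i\ne j), \]
with $C_{ii}$ arbitrary, say $0$. This is well defined because the eigenvalues are simple, which is the standing assumption of this section. Then $DC-CD$ agrees with $\widetilde E$ off the diagonal by construction, and on the diagonal both vanish. Hence $B = XCX^{-1}$ satisfies $AB-BA=E$.

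The only point needing care is the simple-eigenvalue assumption in the converse. If $\lambda_i=\lambda_j$ for some $i\neq j$, the condition would also have to force $\widetilde E_{ij}=0$, and the stated characterization would have to be modified to use the full eigenspace blocks. Under the paper's standing assumption the division is legitimate. As a consistency check on dimensions, the construction shows the map $B\mapsto[A,B]$ has kernel $\{XCX^{-1}: C \text{ diagonal}\}$, of dimension $n$. Its image therefore has dimension $n^2-n$, matching the parameter count given before the statement.
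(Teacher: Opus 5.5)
Your proof is correct and follows the paper's route. The forward direction is the same diagonal computation $y_i^*(AB-BA)x_i=0$ done just before the statement, and your converse (dividing $\widetilde E_{ij}$ by $\lambda_i-\lambda_j$ after reducing to $A=XDX^{-1}$) is exactly the construction of \Cref{lem:compB}, whereas the paper itself supports the converse only by counting $n^2-n$ parameters against $n$ constraints, which your explicit preimage (together with your correct simple-eigenvalue caveat) makes rigorous.
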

These $n$ constraints can be alternatively derived by noting that if $B = p(A)$ is a polynomial in $A$, then
\[ Ap(A) - p(A)A = 0.\]
These constraints are unsurprising: in fact, according to classic perturbation theory \cite{Ste1990,Gre2020}, the first order perturbation of $i$-th eigenvalue is equal to

\[ \lambda_i(A+E) - \lambda_i(A) \approx  y_i^*Ex_i.\]
Now we consider the particular case where the matrix $A$ is block-diagonal
\[ A = \begin{bmatrix}
    A_1 & 0 \\
    0 & A_2
\end{bmatrix}.\]
Any block off-diagonal perturbation
\[ E = \begin{bmatrix}
    0 & E_1 \\
    E_2 & 0
\end{bmatrix}\]
satisfies the $n$ constraints, which means that we can find a matrix $B$ such that 
\[ E = AB-BA.\]
In fact, if we consider the right and left eigenvectors associated with an eigenvalue of the first block, they are of the form
\[ x_i = \begin{bmatrix}
    \Tilde{x}_i \\
    0
\end{bmatrix} \qquad y_i = \begin{bmatrix}
    \Tilde{y}_i\\
    0
\end{bmatrix}\]
where the $0$ vector has the same length as the dimension of $A_2$. This implies that
\[ y_i^*Ex_i = 0.\]
More explicitly, the matrix $B$ is equal to (up to a polynomial in $A$)
\[ B = \begin{bmatrix}
    0 & B_1 \\
    B_2 & 0
\end{bmatrix}\]
where $B_1$ and $B_2$ satisfy
\[ A_1B_1-B_1A_2 = E_1 \qquad A_2B_2 -B_2A_1 = E_2.\]
Therefore, the setting we are studying here is a generalization of what has been studied in previous works \cite{Li2005, Mat1998}.
Note that, in the block case, we can still compute $B_1$ and $B_2$ even if the eigenvalues of $A_1$ and $A_2$ are not simple. All we require is the separation between the spectrum of $A_1$ and that of $A_2$. 

Without loss of generality (aside from the diagonalizability assumption), we will consider the case where the matrix $A$ is diagonal (and so it will be denoted by $D$). In fact, if $A = XDX^{-1}$, the matrix $A+E$ can be rewritten as
\[ A + E = XDX^{-1} + E = X(D + X^{-1}EX)X^{-1}.\]
If $E$ is equal to $AB-BA$, then
\[A +E = X(D + X^{-1}ABX - X^{-1}BAX)X^{-1} = X(D + DX^{-1}BX - X^{-1}BXD)X^{-1}\]
and we reduce to the diagonal case where the matrix $B$ is now $X^{-1}BX$.
All the results in the following sections will hold for generic diagonalizable matrices if we make the substitutions
\[ B \leftarrow X^{-1}BX, \quad E \leftarrow X^{-1}EX\]
and the subsequent estimates
\[ \|E\| \leftarrow \kappa_2(X)\|E\|, \quad \|B\| \leftarrow \kappa_2(X)\|B\|.\]

\section{Bounds for eigenvectors and eigenvalues}\label{sec:eigen}

We start by listing some results that will be useful later on.

\subsection{Preliminary computations and asymptotic expansions}\label{sub:preli}

In this section, we will assume that the eigenvalues of the matrix we are considering are simple.  We consider the block case in \Cref{sec:2x2}. Furthermore we indicate with $D$ a diagonal matrix, and with $A$ a generic matrix.

\begin{lemma}\label{lem:zerodiag}
Consider a diagonal matrix $D$ with distinct eigenvalues $\{\lambda_i\}_{1}^n$ on the diagonal. A perturbation $E$ belongs to the isospectral manifold of $D$ if and only if 
\[ diag(E) = 0.\]
\end{lemma}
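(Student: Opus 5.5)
The plan is to prove the statement directly from the commutator characterization of the tangent space: $E$ lies in the tangent space at $D$ if and only if $E = DB - BD$ for some $B \in \C^{n \times n}$. (The statement says ``belongs to the isospectral manifold'', but the intended meaning is the tangent space.) Everything reduces to an entrywise computation. Since $D = \mathrm{diag}(\lambda_1,\dots,\lambda_n)$, for any $B$ we have
\[ (DB - BD)_{ij} = \lambda_i b_{ij} - b_{ij}\lambda_j = (\lambda_i - \lambda_j) b_{ij}. \]

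For necessity, suppose $E = DB - BD$. Setting $i=j$ in the formula gives $e_{ii} = 0$ for all $i$, so $\mathrm{diag}(E)=0$. Alternatively, one can invoke \Cref{thm:chartang} with $x_i = y_i = e_i$, since these are the right and left eigenvectors of $D$. Then $y_i^*Ex_i = e_{ii}$, and the theorem's condition becomes exactly $\mathrm{diag}(E)=0$.

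For sufficiency, suppose $e_{ii}=0$ for all $i$. We construct $B$ explicitly by
\[ b_{ij} = \frac{e_{ij}}{\lambda_i - \lambda_j} \quad (i \neq j), \qquad b_{ii} \text{ arbitrary (e.g.\ } 0). \]
The denominators are nonzero precisely because the eigenvalues are distinct. With this choice $(DB-BD)_{ij} = e_{ij}$ off the diagonal, and both sides vanish on the diagonal, so $E = DB - BD$.

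There is no real obstacle here. The only point requiring care is that the distinctness hypothesis is exactly what makes the off-diagonal division legitimate. I would also remark that $B$ is unique up to its diagonal, i.e.\ up to adding a matrix commuting with $D$, which is a polynomial in $D$. This matches the count of $n^2-n$ degrees of freedom noted earlier in the paper.
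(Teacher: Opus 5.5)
Your proof is correct, but it takes a different route from the paper. The paper's proof is a one-liner: it invokes \Cref{thm:chartang} with $x_i = y_i = e_i$, so the conditions $y_i^*Ex_i = 0$ become $E_{ii} = 0$. You mention this only in passing for necessity. Your main argument instead works directly from the commutator description of the tangent space, through the entrywise identity $(DB-BD)_{ij} = (\lambda_i-\lambda_j)b_{ij}$.

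What your route buys is rigor in the sufficiency direction. The paper verifies only that $y_i^*(AB-BA)x_i = 0$, i.e.\ the necessity half of \Cref{thm:chartang}. It supports the converse only by counting parameters ($n^2-n$ degrees of freedom against $n$ constraints), so its proof of the lemma inherits an informally justified direction. Your explicit construction of $B$ settles sufficiency outright and yields \Cref{lem:compB} at the same time. Combined with the similarity reduction $E \leftarrow X^{-1}EX$, $B \leftarrow X^{-1}BX$ from \Cref{sec:descrtang}, it also proves \Cref{thm:chartang} itself for diagonalizable $A$ with simple eigenvalues. So the logical dependence is better run in your direction.

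The paper's route is shorter and emphasizes that the constraints are exactly the vanishing of the first-order eigenvalue perturbations. Your reading of ``isospectral manifold'' as ``tangent space'' is the intended one.
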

\begin{proof}
    Let $e_i$ the $i$-th vector of the canonical basis. According to \Cref{thm:chartang}, the matrix $E$ belongs to the isospectral manifold if and only if 
    \[ e_i^*Ee_i = 0\]
    for all $i$, as $e_i$ is the left and right eigenvector associated with $\lambda_i$.
\end{proof}

 Given a matrix $E$ in the tangent space, we can compute the matrix $B$ such that $E = AB-BA$ using the following lemma.

\begin{lemma}\label{lem:compB}
Consider a diagonal matrix $D$ with distinct eigenvalues $\{\lambda_i\}_{1}^n$ on the diagonal, and a  perturbation $E$ with $diag(E) = 0$. Then there exists a matrix $B$ such that
\[ E = DB-BD\]
and the entry $B(i,j),\ i \neq j$  is equal to
\[ B(i,j) = \frac{E_{i,j}}{\lambda_i-\lambda_j} \quad i \neq j.\]
The entries on the diagonal are free (as we have $n$ degrees of freedom). We will impose the diagonal of $B$ to be equal to $0$ for simplicity.
Equivalently,
\begin{equation}\label{eq:Bi}
B(:,i) = (D-\lambda_iI)^{\dagger}E(:,i)    
\end{equation}

\end{lemma}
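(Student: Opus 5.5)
The plan is to compute the commutator $DB-BD$ entrywise and then solve for $B$ one entry at a time. Since $D=\mathrm{diag}(\lambda_1,\dots,\lambda_n)$, for any $B$ we have
\[ (DB-BD)_{i,j} = \lambda_i B(i,j) - B(i,j)\lambda_j = (\lambda_i-\lambda_j)B(i,j). \]
So the matrix equation $E = DB-BD$ splits into $n^2$ independent scalar equations
\[ E_{i,j} = (\lambda_i-\lambda_j)B(i,j). \]

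First take the diagonal entries, $i=j$. Here the equation reads $E_{i,i}=0\cdot B(i,i)$. This holds because of the hypothesis $diag(E)=0$, which by \Cref{lem:zerodiag} is exactly tangency. The equation places no constraint on $B(i,i)$, so these $n$ entries are free, and we may set them to $0$.

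Next take the off-diagonal entries, $i\neq j$. Distinctness of the eigenvalues gives $\lambda_i-\lambda_j\neq 0$, so the equation forces $B(i,j)=E_{i,j}/(\lambda_i-\lambda_j)$. This proves existence. It also shows that $B$ is unique up to its diagonal, which matches the count of $n^2-n$ parameters discussed in \Cref{sec:descrtang}.

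Finally, I verify the column formula \eqref{eq:Bi}. The matrix $D-\lambda_iI$ is diagonal with entries $\lambda_k-\lambda_i$. Its pseudoinverse is diagonal with entries $1/(\lambda_k-\lambda_i)$ for $k\neq i$ and $0$ in position $i$. Applying it to $E(:,i)$ therefore gives $E_{k,i}/(\lambda_k-\lambda_i)=B(k,i)$ for $k\neq i$, and $0=B(i,i)$ in position $i$, consistent with the normalization $diag(B)=0$. There is no real obstacle in this argument. The only point needing care is correctly identifying the pseudoinverse of a singular diagonal matrix, namely inverting the nonzero entries and keeping the zero entry as $0$.
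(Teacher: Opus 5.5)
Your proof is correct and complete. The entrywise identity $(DB-BD)_{i,j}=(\lambda_i-\lambda_j)B(i,j)$, together with distinctness of the $\lambda_i$ and $diag(E)=0$, gives existence, the off-diagonal formula and the freedom on the diagonal; with $diag(B)=0$ it also gives the pseudoinverse column formula. The paper states this lemma without a written proof, and your computation is exactly the argument implicit in its statement.
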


In the generic case, when $A$ is not diagonal, we can reformulate \Cref{lem:compB} in the following way.

\begin{lemma}\label{lem:groupinv}
    Consider a matrix $A = XDX^{-1} \in \C^{n \times n}$ where $D$ is diagonal with distinct eigenvalues. Let $E$ be perturbation in the tangent space, then
    \[ E = AB-BA\]
    and
    \[ Bx_i = (A-\lambda_iI)^{\#}Ex_i\]
    where $(\cdot{})^\#$ is the group inverse \cite{Mey1988,Gre2020}, \cite[pp. 240-241]{Ste1990}, and $x_i$ denotes the $i$-th column of $X$. 
\end{lemma}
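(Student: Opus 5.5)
The plan is to reduce to the diagonal case, apply \Cref{lem:compB}, and then transport the resulting formula back through the similarity $A = XDX^{-1}$. Since the group inverse behaves well under similarity, the transported formula should turn into the group-inverse expression directly.

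First, the reduction. By \Cref{thm:chartang}, $E$ lies in the tangent space exactly when $y_i^*Ex_i=0$ for all $i$, where $y_i$ is the $i$-th column of $X^{-*}$. Set $\widetilde E = X^{-1}EX$. Then $\widetilde E_{ii} = y_i^*Ex_i = 0$, so $\operatorname{diag}(\widetilde E)=0$. \Cref{lem:compB} then gives a matrix $\widetilde B$ with zero diagonal and $\widetilde E = D\widetilde B - \widetilde B D$. Moreover, its columns satisfy $\widetilde B(:,i) = (D-\lambda_i I)^{\dagger}\widetilde E(:,i)$.

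Next, define $B = X\widetilde B X^{-1}$. Then
\[ AB - BA = X(D\widetilde B - \widetilde B D)X^{-1} = X\widetilde E X^{-1} = E, \]
which proves the first claim. For the column formula, compute
\[ Bx_i = X\widetilde B e_i = X (D-\lambda_i I)^{\dagger} X^{-1} E X e_i = X (D-\lambda_i I)^{\dagger} X^{-1} E x_i. \]

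The remaining step, and the only non-routine one, is to identify $X(D-\lambda_iI)^{\dagger}X^{-1}$ with $(A-\lambda_iI)^{\#}$. The diagonal matrix $M = D-\lambda_i I$ has a single zero diagonal entry. Its pseudoinverse inverts the nonzero diagonal entries and leaves the zero entry as zero. This matrix $M^\dagger$ satisfies the three defining equations of the group inverse:
\[ MM^\dagger M = M,\qquad M^\dagger M M^\dagger = M^\dagger,\qquad MM^\dagger = M^\dagger M. \]
Hence $M^\dagger = M^{\#}$. Each of these equations is preserved under conjugation by $X$, and $X M X^{-1} = A - \lambda_i I$. By uniqueness of the group inverse \cite{Mey1988}, it follows that $X M^{\#} X^{-1} = (A-\lambda_i I)^{\#}$.

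Combining the last two displays yields $Bx_i = (A-\lambda_i I)^{\#}Ex_i$. Note that the diagonal freedom in $B$ (adding a polynomial in $A$) changes $Bx_i$ only by a multiple of $x_i$. The stated formula therefore corresponds to the normalization in which $\widetilde B$ has zero diagonal, consistent with the choice made in \Cref{lem:compB}.
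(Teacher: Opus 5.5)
Your proof is correct and follows the route the paper intends. The paper gives no separate argument: it presents the lemma as \Cref{lem:compB} transported through the similarity $E \leftarrow X^{-1}EX$, $B \leftarrow X^{-1}BX$ of \Cref{sec:descrtang}, which is exactly your reduction. Beyond that, you supply details the paper leaves implicit: the identification $X(D-\lambda_iI)^{\dagger}X^{-1} = (A-\lambda_iI)^{\#}$ via similarity invariance and uniqueness of the group inverse, and the observation that the formula holds exactly under the normalization $y_i^*Bx_i = 0$, i.e.\ zero diagonal of $X^{-1}BX$.
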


\Cref{lem:groupinv} links our setting to the classical setting of eigenvector perturbation \cite{Mey1988}.
The first result we present is the asymptotic expansions of the eigenvalues and eigenvectors.
These are known expansions, especially in the physics perturbation theory community and these asymptotic expansions have already been derived by Landau et al. \cite[\S VI]{Lan1958}.
To have clearer results, we introduce the following definition.

\begin{definition}\label{def:t}
Let $t$ be a parameter in $[0,1]$. Define $x_i(t)$ as the right eigenvector of $D+tE$ associated with the eigenvalue $\lambda_i(t)$, which is the perturbation of the $i$-th eigenvalue of $D$, denoted as $\lambda_i := \lambda_i(0)$. Similarly, denote by $y_i(t)$ the left eigenvector associated with the same eigenvalue $\lambda_i(t).$
\end{definition}

\begin{lemma}\label{lem:asympvec}
    Using the definition in \Cref{def:t}, the asymptotic expansion of $x_i(t)$ is
    \[ x_i(t) = e_i - tB(:,i) + t^2(D-\lambda_iI)^{\dagger}EB(:,i) + \mathcal{O}((t\|E\|)^3)\]
    assuming that $B$ has zero diagonal.
\end{lemma}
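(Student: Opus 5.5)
We follow standard Rayleigh--Schrödinger perturbation theory for the simple eigenvalue $\lambda_i$ of $D+tE$. Since $\lambda_i$ is simple, $\lambda_i(t)$ and a suitably normalized $x_i(t)$ are analytic in $t$ for $t\|E\|$ small compared to $\eta_i$. We fix the normalization $e_i^*x_i(t)=1$, so that all correction terms lie in the span of $\{e_j\}_{j\neq i}$. Write
\[ x_i(t)=e_i+t v_1+t^2 v_2+\mathcal{O}(t^3), \qquad \lambda_i(t)=\lambda_i+t\mu_1+t^2\mu_2+\mathcal{O}(t^3),\]
with $e_i^*v_k=0$. Substituting into $(D+tE)x_i(t)=\lambda_i(t)x_i(t)$ and matching powers of $t$ gives, at order $t^k$,
\[ (D-\lambda_iI)v_k = -Ev_{k-1}+\sum_{m=1}^{k}\mu_m v_{k-m}, \qquad v_0=e_i.\]

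\textbf{First order.} The equation reads $(D-\lambda_iI)v_1=-Ee_i+\mu_1e_i$. Taking the $i$-th component gives $\mu_1=E_{ii}=0$ by \Cref{lem:zerodiag}. Since $v_1\perp e_i$ and $(D-\lambda_iI)^\dagger(D-\lambda_iI)$ is the projector onto $e_i^\perp$, we get $v_1=-(D-\lambda_iI)^\dagger E(:,i)=-B(:,i)$ by \Cref{eq:Bi}. This uses the zero diagonal of $B$.

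\textbf{Second order.} The equation reads $(D-\lambda_iI)v_2=-Ev_1+\mu_1v_1+\mu_2e_i = EB(:,i)+\mu_2e_i$, using $\mu_1=0$. The $i$-th component fixes $\mu_2=-e_i^*EB(:,i)$; we do not need its value here. Applying the pseudoinverse and using $v_2\perp e_i$ gives $v_2=(D-\lambda_iI)^\dagger EB(:,i)$, since $(D-\lambda_iI)^\dagger e_i=0$. This yields the claimed expansion.

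\textbf{Remainder.} The step requiring the most care is justifying that the remainder is $\mathcal{O}((t\|E\|)^3)$ rather than merely $\mathcal{O}(t^3)$. We would use analyticity: $x_i(t)$ can be written via the Riesz projector $\frac{1}{2\pi i}\oint (z-D-tE)^{-1}dz\,e_i$, normalized, over a circle of radius $\eta_i/2$ around $\lambda_i$. Expanding the resolvent in a Neumann series in $tE$, the $k$-th coefficient is bounded by a constant times $(\|E\|/\eta_i)^k$. Hence the tail beyond second order is $\mathcal{O}((t\|E\|)^3)$, with constant depending on $\eta_i$, which the statement hides. Alternatively, if the paper only intends a formal expansion, the order-by-order matching above suffices.
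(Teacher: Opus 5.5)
Your proposal is correct and follows the same route as the paper: invoke analyticity of the simple eigenpair, expand $x_i(t)$ and $\lambda_i(t)$ in powers of $t$, and match coefficients in $(D+tE)x_i(t)=\lambda_i(t)x_i(t)$. Your explicit first- and second-order computations, using $\mu_1=E_{ii}=0$ and the normalization $e_i^*x_i(t)=1$, fill in what the paper leaves implicit, and your Riesz-projector/Neumann-series justification that the remainder is $\mathcal{O}((t\|E\|/\eta_i)^3)$ is an addition the paper does not spell out.
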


\begin{proof}
Recalling that eigenvalues and eigenvectors of a simple eigenvalue are analytic functions of the matrix entries~\cite{Kat1995},
the result can be recovered by considering
\[ x_i(t) = e_i + tx^{(1)}_i + t^2x_i^{(2)} + \dots\]
\[ \lambda_i(t) = \lambda_i + t\lambda_i^{(1)} + t^2\lambda_i^{(2)} + \dots\]
where $t$ ranges between $0$ and $1$ and then match the terms with the same power of $t$ in the equality

\[ (D+tE)(e_i + tx^{(1)}_i + t^2x_i^{(2)} + \dots) = (\lambda_i + t\lambda_i^{(1)} + t^2\lambda_i^{(2)} + \dots)(e_i + tx^{(1)}_i + t^2x_i^{(2)} + \dots).\]
    
\end{proof}

A similar result can be derived on the left eigenvector, $y_i(t)$, as it is the right eigenvector of $D^*+tE^*$ and $-B^*$ satisfies
\[ D^*(-B^*) - (-B^*)D^* = E^*.\]
In particular, we have 
\[ y_i^*(t) = e_i + tB(i,:)- t^2B(i,:)E(D-\lambda_iI)^{\dagger} + \mathcal{O}((t\|E\|)^3).\]
\Cref{lem:asympvec} implies that the whole eigenvector matrix $X(t)$ of $D+tE$ is well approximated by
\[ I -tB.\]
The same hold for the inverse $X(t)^{-1}$, which is well approximated by
\[ I + tB.\]
This is not surprising, as we have derived the tangent space using the identity

\[ (I+tB)D(I+tB)^{-1} \approx D + t(DB-BD).\]
Importantly, from the asymptotic expansion of eigenvectors, we are able to recover an asymptotic expansion of a greater order for the eigenvalues.

\begin{lemma}

The asymptotic expansion of $\lambda_i(t)$ is

\[ \lambda_i(t) = \lambda_i + t^2B(i,:)E(:,i) - t^3B(i,:)EB(:,i) + \mathcal{O}((t\|E\|)^4).\]
    
\end{lemma}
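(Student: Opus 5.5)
Because $\lambda_i$ is simple, $\lambda_i(t)$ and $x_i(t)$ are analytic in $t$ near $0$ \cite{Kat1995}. The plan is to reuse the expansion of $x_i(t)$ from \Cref{lem:asympvec} and extract $\lambda_i(t)$ by a Rayleigh-quotient identity. Normalize $x_i(t)$ so that $e_i^*x_i(t)=1$; this is compatible with \Cref{lem:asympvec}, since $B$ has zero diagonal and $(D-\lambda_iI)^{\dagger}$ annihilates the $i$-th component. Taking the $i$-th row of $(D+tE)x_i(t)=\lambda_i(t)x_i(t)$ and using $E_{ii}=0$ (\Cref{lem:zerodiag}) gives the exact identity
\[ \lambda_i(t) = \lambda_i + t\,E(i,:)\,x_i(t). \]
Since $E(i,:)e_i=E_{ii}=0$, only the off-diagonal correction terms of $x_i(t)$ contribute. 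So an $\mathcal{O}((t\|E\|)^3)$ remainder in $x_i(t)$ becomes an $\mathcal{O}((t\|E\|)^4)$ remainder in $\lambda_i(t)$.

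Next I substitute the expansion $x_i(t)=e_i - tB(:,i)+t^2(D-\lambda_iI)^{\dagger}EB(:,i)+\mathcal{O}((t\|E\|)^3)$. This gives
\[ \lambda_i(t)=\lambda_i - t^2E(i,:)B(:,i) + t^3E(i,:)(D-\lambda_iI)^{\dagger}EB(:,i)+\mathcal{O}((t\|E\|)^4). \]
The remaining work is to rewrite both coefficients in terms of $B(i,:)$.

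For the rows of $B$, \Cref{lem:compB} gives $B(i,j)=E_{ij}/(\lambda_i-\lambda_j)$ for $j\neq i$. Hence, with $B(i,i)=0$ and $E_{ii}=0$,
\[ B(i,:) = E(i,:)(\lambda_iI-D)^{\dagger} = -E(i,:)(D-\lambda_iI)^{\dagger}. \]
The $t^3$ coefficient is then $E(i,:)(D-\lambda_iI)^{\dagger}EB(:,i) = -B(i,:)EB(:,i)$, which matches the claim.

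For the $t^2$ coefficient, expand in entries: $-E(i,:)B(:,i) = -\sum_{j\neq i} E_{ij}E_{ji}/(\lambda_j-\lambda_i)$. On the other hand, $B(i,:)E(:,i)=\sum_{j\neq i}E_{ij}E_{ji}/(\lambda_i-\lambda_j)$, which is the same sum. Equivalently, both equal $-E(i,:)(D-\lambda_iI)^{\dagger}E(:,i)$.

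The main obstacle is the remainder. I need the $\mathcal{O}((t\|E\|)^3)$ term in $x_i(t)$ to be controlled uniformly, with constants depending on the gap $\eta_i$. Multiplying it by $t\|E(i,:)\|\le t\|E\|$ then gives the stated $\mathcal{O}((t\|E\|)^4)$ term. I would take this uniformity from analyticity of the eigenpair, as in \Cref{lem:asympvec}, rather than re-derive it. As a cross-check on the third-order coefficient, I would also match the $t^3$ terms directly in the power-series equation from the proof of \Cref{lem:asympvec}, projecting onto $e_i$, which yields $\lambda_i^{(3)}=E(i,:)x_i^{(2)}$.
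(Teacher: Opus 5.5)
Your proof is correct, but it takes a different route from the paper.

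\textbf{The two routes.} The paper substitutes the expansions of both $x_i(t)$ and $y_i(t)$ into the two-sided Rayleigh quotient $y_i(t)^*(D+tE)x_i(t)/(y_i(t)^*x_i(t))$. You instead use the one-sided quotient with the fixed test vector $e_i$, i.e.\ the exact identity $\lambda_i(t)-\lambda_i=tE(i,:)x_i(t)$. This is the $i$-th row of the eigen-equation, which the paper uses only implicitly when it projects with $\Pi_i$ in \Cref{eq:eigeneq}. A small correction: the identity holds for any $E$ once $e_i^*x_i(t)=1$, so $E_{ii}=0$ is needed only in the next step. Your coefficient identifications are right: $B(i,:)E(:,i)=-E(i,:)B(:,i)$ because $(BE+EB)_{ii}=0$, and $B(i,:)=-E(i,:)(D-\lambda_iI)^{\dagger}$.

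\textbf{What the paper's route buys.} The two-sided quotient is stationary, so the eigenvector errors enter only as a product. In fact the first-order vectors $e_i-tB(:,i)$ and $e_i^*+tB(i,:)$ already give both the $t^2$ and $t^3$ coefficients, and the symmetric form $B(i,:)E(:,i)$ comes out naturally from the left vector. The cost is expanding the denominator and combining cross terms such as $\lambda_iB(i,:)B(:,i)-B(i,:)DB(:,i)=E(i,:)B(:,i)$.

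\textbf{What your route buys.} You need the second-order term of $x_i(t)$ but no left eigenvector and no denominator. The remainder is exactly $tE(i,:)r_i^{(2)}(t)$, so it is bounded by $t\|E\|\,\|r_i^{(2)}(t)\|$ with no hidden constants. Combined with \Cref{thm:boundeigenvec}, and \Cref{thm:boundeige2} to control $|\Delta(t)|$, this gives the two non-asymptotic eigenvalue theorems that the paper states without proof. For instance,
$|\lambda_i(1)-\lambda_i-B(i,:)E(:,i)|\le\|E\|\,\|r_i^{(1)}(1)\|\le 2\|E\|\rho_i^2/(1-2\rho_i)$, so $C_1=4$ works when $\rho_i\le 1/4$.
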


\begin{proof}
    This result follows from the Rayleigh-Ritz quotient identity
    \[ \lambda_i(t) = \frac{y_i(t)^* (D+tE)x_i(t)}{y_i(t)^*x_i(t)}\]
    and using the asymptotic expansions of the eigenvectors in \Cref{lem:asympvec}.
\end{proof}

These results are already surprising. In fact, if $B$ is known, the asymptotic expansions depend only on $B$, $E$, $x_i$ and $y_i$ and do not require any knowledge of other eigenvalues or other eigenvectors. Furthermore, we have

\[ \sum_{i} B(i,:)E(:,i) = 0 \qquad \sum_iB(i,:)EB(:,i) = 0 \]
so these expansions preserve the sum of the eigenvalues.
Finally, we recall the classical expression for the derivative of an eigenvalue

\begin{lemma}\label{lem:derivative}
    Let $A \in \C^{n \times n}$ and let $E \in \C^{n \times n}$ a perturbation. Let $A(t) = A + tE$. The derivative of the $i$-th eigenvalue of $A + tE$, denoted as $\lambda_i(t)$, is equal to
    \[ \frac{d\lambda_i(t)}{dt} = \frac{y_i(t)^*Ex_i(t)}{y_i(t)^*x_i(t)}\]
    where $y_i(t)$, $x_i(t)$ are the left and right eigenvectors of $\lambda_i(t)$.
\end{lemma}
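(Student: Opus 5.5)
The plan is to differentiate the eigenvalue equation directly. Since $\lambda_i(t)$ is assumed simple (as throughout this section), the results recalled in the proof of \Cref{lem:asympvec} (analyticity of simple eigenvalues and eigenvectors \cite{Kat1995}) show that $\lambda_i(t)$, $x_i(t)$ and $y_i(t)$ can be chosen differentiable in $t$ near the point of interest. Simplicity also gives $y_i(t)^*x_i(t)\neq 0$, so the quotient in the statement is well defined.

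First I differentiate the right eigenvalue equation
\[ (A+tE)x_i(t) = \lambda_i(t)x_i(t) \]
with respect to $t$. Writing $\dot{}$ for $d/dt$, this gives
\[ Ex_i(t) + (A+tE)\dot x_i(t) = \dot\lambda_i(t)x_i(t) + \lambda_i(t)\dot x_i(t). \]

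Next I multiply on the left by $y_i(t)^*$. The left eigenvector satisfies $y_i(t)^*(A+tE) = \lambda_i(t)y_i(t)^*$, so the two terms containing $\dot x_i(t)$ become $\lambda_i(t)\,y_i(t)^*\dot x_i(t)$ on each side and cancel. What remains is
\[ y_i(t)^*Ex_i(t) = \dot\lambda_i(t)\,y_i(t)^*x_i(t). \]
Dividing by $y_i(t)^*x_i(t)$ gives the claimed formula.

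The only delicate step is justifying differentiability of $x_i(t)$ together with $y_i(t)^*x_i(t)\neq0$. The nonvanishing is standard: for a simple eigenvalue, $y_i^*x_i=0$ would put $x_i$ in the range of $A+tE-\lambda_i I$ (the orthogonal complement of $y_i$), giving a Jordan chain and contradicting simplicity. The normalization of $x_i(t)$ and $y_i(t)$ does not matter, since the quotient is invariant under rescaling of either vector.
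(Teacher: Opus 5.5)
Your proof is correct. The paper gives no proof of this lemma and only recalls it as the classical derivative formula. Your argument (differentiate $(A+tE)x_i(t)=\lambda_i(t)x_i(t)$, apply $y_i(t)^*$ so the $\dot x_i(t)$ terms cancel, then divide by $y_i(t)^*x_i(t)\neq 0$, which you justify correctly from simplicity) is the standard derivation behind it, and restricting to simple eigenvalues matches the section's standing assumption.
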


In the following section, we provide non-asymptotic results. The following quantity will be of crucial importance in the next sections:
\[ \rho_i = \frac{\|E\|}{\eta_i}\]
where
\[ \eta_i = \min_{\lambda_j,\ j \neq i} |\lambda_i - \lambda_j|.\] is referred to as the spectral gap relative to the $i$-th eigenvalue.

\subsection{Non-asymptotic bounds on perturbed eigenvectors}
In this section, we provide non-asymptotic bounds for eigenvectors. In the literature, perturbed invariant subspaces (including eigenvectors) have been studied in various works \cite{Kar2014}  \cite[\S V]{Ste1990} \cite[\S 2 \S 4]{Cha2011} \cite{Mey1988}.
Consider again the matrix $D+tE$ where $t$ ranges between $0$ and $1$. Let $x_i(t)$ denote the $i$-th right eigenvector and $y_i(t)$ the $i$-th eigenvector, both normalized to have the $i$-th entry equal to $1$ (we can do this if the norm of the perturbation is small enough, see for example \cite[\S V]{Ste1990}). Let also $\eta_i = \min_{i \neq j} |\lambda_i - \lambda_j|$ be the $i$-th spectral gap.
For simplicity, we give bounds only on $x_i(t)$ ($y_i(t)$ is analogous). Recall the asymptotic expansion for eigenvectors in \Cref{lem:asympvec}
    \[ x_i(t) = e_i - tB(:,i) + t^2(D-\lambda_iI)^{\dagger}EB(:,i) + \mathcal{O}((t\|E\|)^3)\]
Define 
\begin{align}\label{eq:x_ir_i}
\begin{split}
    &x_i^{(0)} := e_i \quad
    x_i^{(1)} := -B(:,i)\quad
    x_i^{(2)} :=  (D-\lambda_iI)^{\dagger}EB(:,i)\\
    &\Delta_i(t) := \lambda_i(t) - \lambda_i\quad
    \Pi_i[z] := (I - e_ie_i^*)z\\
    &r_i^{(0)}(t) := x_i(t) - x_i^{(0)} \quad
    r_i^{(1)}(t) := x_i(t) - x_i^{(0)}-tx_i^{(1)} \quad
    r_i^{(2)}(t) := x_i(t) - x_i^{(0)}-tx_i^{(1)}-t^2x_i^{(2)}.
\end{split}
\end{align}
In other words, $x_i^{(k)}$ is the coefficient of $t^k$ in the asymptotic expansion of $x_i(t)$ and $r_i^{(k)}(t)$ is the residual relative to the asymptotic expansion of order $k$.
We define $y_i^{(\cdot)}$ and $\ell_i^{(\cdot)}(t)$ the analogous of $x_i^{(\cdot)}$ and $r_i^{(\cdot)}(t)$ for $y_i(t)$. The main results of this section are the following.

\begin{theorem}[Non-asymptotic bounds on eigenvectors]\label{thm:boundeigenvec}
Let $D$ be a diagonal matrix with eigenvalues $\{\lambda_i\}_{i=1}^n$ on the diagonal. Let $E$, $B$, $x_i(t)$ as defined before. Recall the definition 
\[ \rho_i = \frac{\|E\|}{\eta_i}.\]
Then $r_i^{(k)}(t)$ as in \Cref{eq:x_ir_i} satisfy
\begin{align*}
    \|r_i^{(0)}(t)\| &\leq \frac{t\|B(:,i)\|}{1 - \frac{|\Delta(t)| +t\|E\|}{\eta_i}} \le \frac{t\rho_i}{1-2t\rho_i}\\
     \| r_i^{(1)}(t)\| &\leq \frac{t(|\Delta(t)| + t\|E\|)\|B(:,i)\|}{\eta_i(1 -\frac{\Delta(t) + t\|E\|}{\eta_i})} \le  \frac{2(t\rho_i)^2}{1 - 2t\rho_i}\\
     \|r_i^{(2)}(t)\| &\leq  \left(|\Delta(t)|\frac{t\|B(:,i)\|}{\eta_i} + \frac{t^2(|\Delta(t)|\|E\|+t\|E\|^2)\|B(:,i)\|}{\eta_i^2}\right)\frac{1}{1-\frac{2t\|E\|}{\eta_i}} \\
     &\leq \left(t\frac{|\Delta(t)|}{\eta_i}\rho_i + 2(t\rho_i)^3\right)\frac{1}{1-2t\rho_i}.
\end{align*}
\end{theorem}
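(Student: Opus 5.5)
The plan is to work directly with the eigenvalue equation for $D+tE$, projected off the $i$-th coordinate, and turn it into a fixed-point identity for the residuals. With the normalization $e_i^*x_i(t)=1$, write $x_i(t)=e_i+r_i^{(0)}(t)$. Then $\Pi_i[r_i^{(0)}(t)]=r_i^{(0)}(t)$ and $E(i,i)=0$ by \Cref{lem:zerodiag}. Projecting $(D+tE)x_i(t)=\lambda_i(t)x_i(t)$ with $\Pi_i$ gives
\[ (D-\lambda_iI)r_i^{(0)} = \Delta_i(t)\,r_i^{(0)} - t\,\Pi_i[E e_i] - t\,\Pi_i[E r_i^{(0)}]. \]
Applying the pseudoinverse $(D-\lambda_iI)^{\dagger}$, which acts as the inverse on $\mathrm{range}(\Pi_i)$ and has norm $1/\eta_i$, and using \Cref{eq:Bi} in the form $(D-\lambda_iI)^{\dagger}Ee_i=B(:,i)$, I get the key identity
\[ r_i^{(0)} = -tB(:,i) + (D-\lambda_iI)^{\dagger}\bigl(\Delta_i(t)\,r_i^{(0)} - tE\,r_i^{(0)}\bigr). \]

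All three bounds should follow from this identity by rearranging and taking norms, together with $\|\Pi_i\|\le 1$.

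\textbf{Order zero.} Take norms in the identity to get $\|r^{(0)}\|\le t\|B(:,i)\|+\frac{|\Delta|+t\|E\|}{\eta_i}\|r^{(0)}\|$. Solving for $\|r^{(0)}\|$ gives the first inequality.

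\textbf{Order one.} Since $r^{(1)}=r^{(0)}+tB(:,i)$, the identity gives $r^{(1)}=(D-\lambda_iI)^{\dagger}(\Delta-tE)r^{(0)}$. Bounding $\|r^{(0)}\|$ by the order-zero estimate yields the second inequality.

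\textbf{Order two.} Substitute $r^{(0)}=-tB(:,i)+r^{(1)}$ into the identity. This gives
\[ r^{(2)} = -t\Delta\,(D-\lambda_iI)^{\dagger}B(:,i) + (D-\lambda_iI)^{\dagger}(\Delta-tE)\,r^{(1)}. \]
The leading $t^2(D-\lambda_iI)^{\dagger}EB(:,i)$ term cancels exactly against $t^2x_i^{(2)}$. The first remaining term has norm at most $|\Delta|t\|B(:,i)\|/\eta_i$. The second has norm at most $\frac{|\Delta|+t\|E\|}{\eta_i}\|r^{(1)}\|$, and inserting the order-one bound produces the $\eta_i^{-2}$ term of the stated bound. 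The stated form appears to come from a slightly different bookkeeping, for instance writing the second term through $r^{(2)}$ itself and solving, so the constants need checking; I would aim for that form and accept the given constants.

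\textbf{Simplified bounds.} These need $\|B(:,i)\|\le\|E\|/\eta_i=\rho_i$, which holds because $B(:,i)=(D-\lambda_iI)^{\dagger}E(:,i)$. They also need $|\Delta_i(t)|\le t\|E\|$, which holds because $D$ is normal and Bauer--Fike applies. Substituting both produces the factors $1/(1-2t\rho_i)$ and $2(t\rho_i)^2$, and $2(t\rho_i)^3$ at order two.

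\textbf{Main obstacle.} The normalization $e_i^*x_i(t)=1$ must be valid, which requires $x_i(t)$ to stay the continuous eigenvector branch with nonzero $i$-th entry. I would restrict to $2t\rho_i<1$: then, by Gershgorin/Bauer--Fike, $\lambda_i(t)$ remains separated from the other eigenvalues and no denominator vanishes, so the bounds are nonvacuous exactly in this regime.
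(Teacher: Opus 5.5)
Your proposal is correct and follows essentially the paper's proof. You use the same projected eigen-equations; the paper's third one, $(D-\lambda_iI)r_i^{(2)}(t)=\Delta(t)r_i^{(0)}(t)-t\Pi_i[Er_i^{(1)}(t)]$, is your order-two identity before you split $\Delta(t)r_i^{(0)}(t)$. You invert $D-\lambda_iI$ on $\mathrm{range}(\Pi_i)$ with norm $1/\eta_i$ and then apply $\|B(:,i)\|\le\rho_i$ together with Bauer--Fike.

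Your hedge at order two is unnecessary. With $c=(|\Delta(t)|+t\|E\|)/\eta_i$, your estimate $t|\Delta(t)|\|B(:,i)\|/\eta_i+tc^2\|B(:,i)\|/(1-c)$ equals $\frac{t\|B(:,i)\|}{1-c}\bigl(\frac{|\Delta(t)|}{\eta_i}+\frac{t\|E\|}{\eta_i}c\bigr)$. This is exactly what the paper gets by inserting the first two bounds into the unsplit equation and then relaxing $1-c\ge 1-2t\rho_i$. Your explicit restriction to $2t\rho_i<1$, which the paper leaves implicit, is what makes that relaxation and the normalization $e_i^*x_i(t)=1$ legitimate.
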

\begin{proof}
Thanks to the eigenvector definition 
\[ (D+tE)x_i(t) = \lambda_i(t)x_i(t)\]
we can write the following equations

\begin{align}\label{eq:eigeneq}
\begin{split}
    (D-\lambda_iI)r_i^{(0)}(t) &= \Delta(t)r_i^{(0)}(t) - t\Pi_i[Er_i^{(0)}(t)]- tEe_i\\
     (D-\lambda_iI)r_i^{(1)}(t) &= \Delta(t)r_i^{(0)}(t) -t\Pi_i[Er_i^{(0)}(t)]\\
     (D-\lambda_iI)r_i^{(2)}(t) &= \Delta(t)r_i^{(0)}(t)-t\Pi_i[Er_i^{(1)}(t)].
\end{split}
\end{align}
\paragraph{Linear bound} From the first equation in \Cref{eq:eigeneq} we get
\[ \|r_i^{(0)}(t)\| \le \frac{|\Delta(t)|}{\eta_i}\|r_i^{(0)}(t)\| +t\frac{\|E\|}{\eta_i}\|r_i^{(0)}(t)\|  + t\|B(:,i)\|\]
where we have used that $(D-\lambda_iI)^{\dagger}Ee_i = Be_i$ as for \Cref{lem:compB}. Hence,
\begin{equation}\label{eq:boundeigvec1}
\|r_i^{(0)}(t)\| \leq \frac{t\|B(:,i)\|}{1 - \frac{|\Delta(t)| +t\|E\|}{\eta_i}}
\end{equation}
\paragraph{Quadratic bound} From the second equation in \Cref{eq:eigeneq} and from \Cref{eq:boundeigvec1} we get
\begin{equation}\label{eq:boundeigvec2}
\|r_i^{(1)}(t)\| =  \frac{t(|\Delta(t)| +t\|E\|)\|B(:,i)\|}{\eta_i\left(1 - \frac{|\Delta(t)| +t\|E\|}{\eta_i}\right)}
\end{equation}
\paragraph{{Cubic\protect\footnote{See next section for a complete proof.}} bound} From the third equation of \Cref{eq:eigeneq}, \Cref{eq:boundeigvec1} and \Cref{eq:boundeigvec2} we get
\begin{equation}\label{eq:boundeigvec3}
    \|r_i^{(2)}(t)\| \leq \left(|\Delta(t)|\frac{t\|E\|}{\eta_i^2} + \frac{t^2(|\Delta(t)|\|E\|^2+t\|E\|^3)}{\eta_i^3}\right)\frac{1}{1-\frac{2t\|E\|}{\eta_i}}
\end{equation}
where we have used $\|B(:,i)\| \le \|E\|/\eta_i$ 
from~\Cref{eq:Bi}
and $|\Delta(t)| \le \|E\|$ by Bauer-Fike.
\end{proof}

We make a few remarks on \Cref{thm:boundeigenvec}. First, the bound
\[ \|B(:,i)\| \le \frac{\|E\|}{\eta_i} = \rho_i\]
may be loose if $E$ has a particular structure, for example if $E$ has larger entries $E(j,i)$ where $|\lambda_j - \lambda_i|$ is also large. This is because
\[ \|B(:,i)\|^2 = \sum_j\frac{E^2_{j,i}}{(\lambda_j - \lambda_i)^2}.\]
Second, the term
\[ |\Delta(t)|\frac{t\|E\|}{\eta_i^2}\]
in the last inequality of \Cref{thm:boundeigenvec} must be cubic in $\|E\|$ in order for that bound to be meaningful, as we are bounding an approximation of order $\|E\|^2$. Equivalently, we need to prove that $|\Delta(t)|$ is of order $\|E\|^2$, a result stronger than the simpler Bauer-Fike and similar to the Li and Li bound \cite{Li2005}. The proof of this result will be the central result of the next section. In \Cref{fig:unif}, \Cref{fig:sqrroot} and \Cref{fig:sqr} we plot the bounds in \Cref{thm:boundeigenvec} for some example matrices. In these figures we can see that bounding $\|B(:,i)\|$ with $\|E\|/\eta_i$ makes the bound some order of magnitudes larger. 

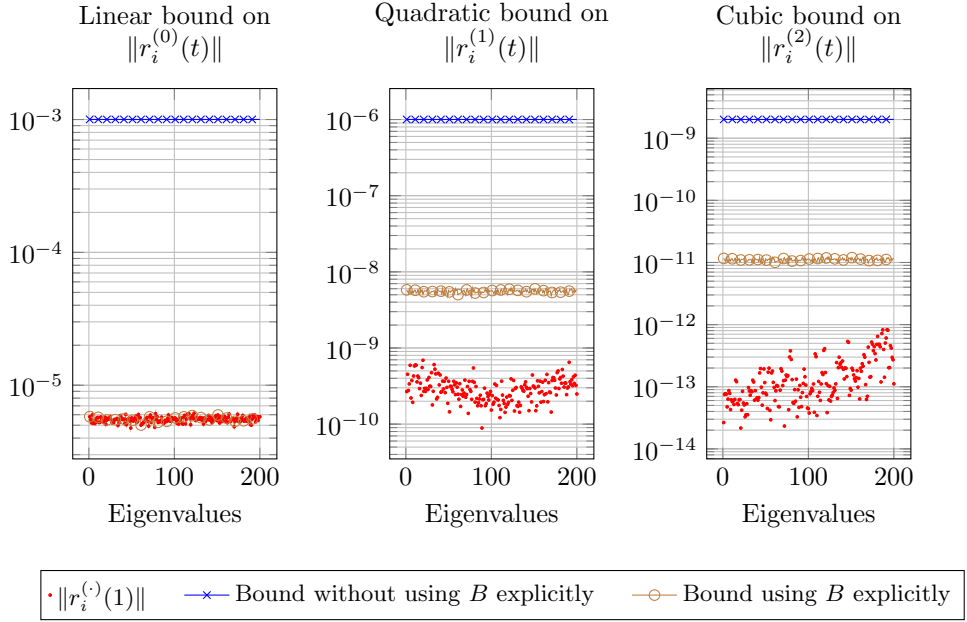
\begin{figure}\label{fig:unif}
\centering
\begin{tikzpicture}
\begin{groupplot}[
    group style={
        group size=3 by 1,
        horizontal sep=1.5cm
    },
    width = 0.33\textwidth,
    height = 0.5\textwidth,
    ymode=log,
    legend style={font=\small},
    grid=both,
    xlabel = {Eigenvalues},
]

\nextgroupplot[
align =center,
    title={Linear bound on \\ $\|r_i^{(0)}(t)\|$}
]

\addplot[mark = *, only marks,  mark size=0.5pt,  color=red] table[x expr=\thisrow{n}, y =err] {experiments/linearequi.tex};

\addplot[mark = x, color=blue, mark repeat=10] table[x expr=\thisrow{n}, y =bound] {experiments/linearequi.tex};

\addplot[mark = o, color=brown, mark repeat=10] table[x expr=\thisrow{n}, y =boundsharp] {experiments/linearequi.tex};

\nextgroupplot[
align =center,
    title={Quadratic bound on \\ $\|r_i^{(1)}(t)\|$}
]

\addplot[mark = *, only marks,  mark size=0.5pt,  color=red] table[x expr=\thisrow{n}, y =err] {experiments/quadraticequi.tex};
\addplot[mark = x, color=blue, mark repeat=10] table[x expr=\thisrow{n}, y =bound] {experiments/quadraticequi.tex};
\addplot[mark = o, color=brown,  mark repeat=10] table[x expr=\thisrow{n}, y =boundsharp] {experiments/quadraticequi.tex};

\nextgroupplot[
align =center,
    title={Cubic bound on \\ $\|r_i^{(2)}(t)\|$},
    legend columns=3,
    legend style={
        at={(-1,-0.3)},
        anchor=north,
        /tikz/every even column/.append style={column sep=12pt}
    }
]

\addplot[mark = *, only marks,  mark size=0.5pt,  color=red] table[x expr=\thisrow{n}, y =err] {experiments/cubicequi.tex};
\addlegendentry{$\|r_i^{(\cdot)}(1)\|$}
\addplot[mark = x, color=blue, mark repeat=10] table[x expr=\thisrow{n}, y =bound] {experiments/cubicequi.tex};
\addlegendentry{Bound without using $B$ explicitly}
\addplot[mark = o, color=brown,  mark repeat=10] table[x expr=\thisrow{n}, y =boundsharp] {experiments/cubicequi.tex};
\addlegendentry{Bound using $B$ explicitly}

\end{groupplot}

\end{tikzpicture}
\caption{Plot of bounds in \Cref{thm:boundeigenvec}. The red dots are the actual norm of the residuals, the blue line is the bound when we have used $\|B(:,i)\| \le \|E\|/\eta_i$ and the brown circles are the bound where $\|B(:,i)\|$ is kept as it is. $|\Delta(t)|$ in is bounded by \Cref{thm:boundeige2}. The matrix considered is a diagonal matrix with eigenvalues equispaced from $1$ to $200$ and $B$ is a random off-diagonal matrix such that $\|E\| = 10^{-3}$. Note that $\rho_i$ is constant and equal to $10^{-3}$. The blue line in the linear bound is $ \approx \|E\|/\eta_i$, which is the classical bound presented for perturbed eigenvectors. 
}
\end{figure}

\begin{figure}\label{fig:sqrroot}
\centering
\begin{tikzpicture}
\begin{groupplot}[
    group style={
        group size=3 by 1,
        horizontal sep=1.5cm
    },
    width = 0.33\textwidth,
    height = 0.5\textwidth,
    ymode=log,
    legend style={font=\small},
    grid=both,
    xlabel = {Eigenvalues},
]

\nextgroupplot[
align =center,
    title={Linear bound on \\ $\|r_i^{(0)}(t)\|$}
]

\addplot[mark = *, only marks,  mark size=0.5pt,  color=red] table[x expr=sqrt(\thisrow{n}), y =err] {experiments/linearsqrroot.tex};

\addplot[mark = x, color=blue, mark repeat=10] table[x expr=sqrt(\thisrow{n}), y =bound] {experiments/linearsqrroot.tex};

\addplot[mark = o, color=brown,mark repeat=10] table[x expr=sqrt(\thisrow{n}), y =boundsharp] {experiments/linearsqrroot.tex};

\nextgroupplot[
align =center,
    title={Quadratic bound on \\ $\|r_i^{(1)}(t)\|$}
]

\addplot[mark = *, only marks,  mark size=0.5pt,  color=red] table[x expr=sqrt(\thisrow{n}), y =err] {experiments/quadraticsqrroot.tex};
\addplot[mark = x, color=blue, mark repeat=10] table[x expr=sqrt(\thisrow{n}), y =bound] {experiments/quadraticsqrroot.tex};
\addplot[mark = o, color=brown,  mark repeat=10] table[x expr=sqrt(\thisrow{n}), y =boundsharp] {experiments/quadraticsqrroot.tex};

\nextgroupplot[
align =center,
    title={Cubic bound on \\ $\|r_i^{(2)}(t)\|$},
    legend columns=3,
    legend style={
        at={(-1,-0.3)},
        anchor=north,
        /tikz/every even column/.append style={column sep=12pt}
    }
]

\addplot[mark = *, only marks,  mark size=0.5pt,  color=red] table[x expr=sqrt(\thisrow{n}), y =err] {experiments/cubicsqrroot.tex};
\addlegendentry{$\|r_i^{(\cdot)}(1)\|$}
\addplot[mark = x, color=blue, mark repeat=10] table[x expr=sqrt(\thisrow{n}), y =bound] {experiments/cubicsqrroot.tex};
\addlegendentry{Bound without using $B$ explicitly}
\addplot[mark = o, color=brown, mark repeat=10] table[x expr=sqrt(\thisrow{n}), y =boundsharp] {experiments/cubicsqrroot.tex};
\addlegendentry{Bound using $B$ explicitly}

\end{groupplot}

\end{tikzpicture}
\caption{Plot of bounds in \Cref{thm:boundeigenvec}. The red dots are the actual norm of the residual, the blue line is the bound when we have used $\|B(:,i)\| \le \|E\|/\eta_i$ and the brown circles are the bound where $\|B(:,i)\|$ is kept as it is. $|\Delta(t)|$ in is bounded by \Cref{thm:boundeige2}. The matrix considered is a diagonal matrix with eigenvalues equal to the square root of the  equispaced points from $1$ to $200$ and $B$ is a random off-diagonal matrix such that $\|E\| = 10^{-3}$. Note that, for the largest eigenvalues, $\rho_i \approx 10^{-1}$. The blue line in the linear bound is $ \approx \|E\|/\eta_i$, which is the classical bound presented for perturbed eigenvectors. }
\end{figure}

\begin{figure}\label{fig:sqr}
\centering
\begin{tikzpicture}
\begin{groupplot}[
    group style={
        group size=3 by 1,
        horizontal sep=1.5cm
    },
    width = 0.33\textwidth,
    height = 0.5\textwidth,
    ymode=log,
    legend style={font=\small},
    grid=both,
    xlabel = {Eigenvalues},
]

\nextgroupplot[
align =center,
    title={Linear bound on \\ $\|r_i^{(0)}(t)\|$}
]

\addplot[mark = *, only marks,  mark size=0.5pt,  color=red] table[x expr=(\thisrow{n})^2, y =err] {experiments/linearpow2.tex};

\addplot[mark = x, color=blue, mark repeat=10] table[x expr=(\thisrow{n})^2, y =bound] {experiments/linearpow2.tex};

\addplot[mark = o, color=brown, mark repeat=10] table[x expr=(\thisrow{n})^2, y =boundsharp] {experiments/linearpow2.tex};

\nextgroupplot[
align =center,
    title={Quadratic bound on \\ $\|r_i^{(1)}(t)\|$}
]

\addplot[mark = *, only marks,  mark size=0.5pt, color=red] table[x expr=(\thisrow{n})^2, y =err] {experiments/quadraticpow2.tex};
\addplot[mark = x, color=blue, mark repeat=10] table[x expr=(\thisrow{n})^2, y =bound] {experiments/quadraticpow2.tex};
\addplot[mark = o, color=brown, mark repeat=10] table[x expr=(\thisrow{n})^2, y =boundsharp] {experiments/quadraticpow2.tex};

\nextgroupplot[
align =center,
    title={Cubic bound on \\ $\|r_i^{(2)}(t)\|$},
    legend columns=3,
    legend style={
        at={(-1,-0.3)},
        anchor=north,
        /tikz/every even column/.append style={column sep=12pt}
    }
]

\addplot[mark = *, only marks,  mark size=0.5pt,  color=red] table[x expr=(\thisrow{n})^2, y =err] {experiments/cubicpow2.tex};
\addlegendentry{$\|r_i^{(\cdot)}(1)\|$}
\addplot[mark = x, color=blue, mark repeat=10] table[x expr=(\thisrow{n})^2, y =bound] {experiments/cubicpow2.tex};
\addlegendentry{Bound without using $B$ explicitly}
\addplot[mark = o, color=brown, mark repeat=10] table[x expr = (\thisrow{n})^2, y =boundsharp] {experiments/cubicpow2.tex};
\addlegendentry{Bound using $B$ explicitly}

\end{groupplot}

\end{tikzpicture}
\caption{Plot of bounds in \Cref{thm:boundeigenvec}. The red dots are the actual norm of the residual, the blue line is the bound when we have used $\|B(:,i)\| \le \|E\|/\eta_i$ and the brown circles are the bound where $\|B(:,i)\|$ is kept as it is. $|\Delta(t)|$ in is bounded by \Cref{thm:boundeige2}. The matrix considered is a diagonal matrix with eigenvalues equal to the square of the  equispaced points from $1$ to $200$ and $B$ is a random off-diagonal matrix such that $\|E\| = 0.3$. Note that, for the largest eigenvalues, $\rho_i \approx 10^{-3}$. The blue line in the linear bound is $ \approx \|E\|/\eta_i$, which is the classical bound presented for perturbed eigenvectors. }
\end{figure}
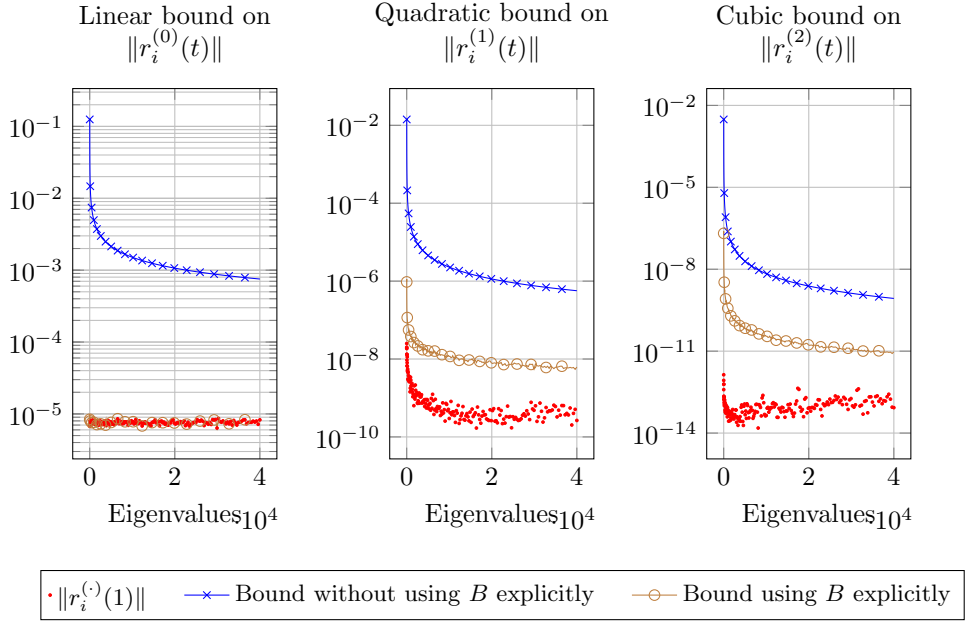

\subsection{Non-asymptotic bounds on perturbed eigenvalues using eigenvector asymptotic expansion}

In this section, we provide non-asymptotic bounds for eigenvalues using \Cref{thm:boundeigenvec}. The main theorem is the following.

\begin{theorem}[Non-asymptotic bounds on eigenvectors]\label{thm:boundeige2}
Let $D$ a diagonal matrix with eigenvalues $\{\lambda_i\}_{i=1}^n$ on the diagonal. Let $E$, $B$, $x_i(t)$ be as defined before. Recall the definition 
\[ \rho_i = \frac{\|E\|}{\eta_i}.\]
We have the following;
\begin{align*}
    |\lambda_i(t) - \lambda_i| \le \frac{t^2\|E\|\frac{\|B(i,:)\|+\|B(:,i)\|}{2} + \frac{t^3\|E\|\|B(i,:)\|\|B(:,i)\|}{3\left(1 - \frac{2\|E\|}{\eta_i}\right)}}{1 - \frac{2\|E\|}{\eta_i} - \frac{\|B(i,:)\|\|B(:,i)\| }{\left(1 - \frac{2\|E\|}{\eta_i}\right)}} \le \frac{t^2\|E\|(\rho_i + \frac{t}{3} \frac{\rho_i^2}{1-\rho_i})}{1-2\rho_i-\frac{\rho_i^2}{1 - 2 \rho_i}}.
\end{align*}
\end{theorem}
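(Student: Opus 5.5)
We integrate the derivative formula of \Cref{lem:derivative} over $t$, expand the eigenvectors with \Cref{thm:boundeigenvec}, and close a self-referential bound on $\Delta(t)=\lambda_i(t)-\lambda_i$. Normalize $x_i(s)$ and $y_i(s)$ so that their $i$-th entries equal $1$. Then $x_i(s)=e_i+r_i^{(0)}(s)$ and $y_i(s)=e_i+\ell_i^{(0)}(s)$, with $r_i^{(0)},\ell_i^{(0)}$ supported off index $i$, so $y_i(s)^*x_i(s)=1+\ell_i^{(0)}(s)^*r_i^{(0)}(s)$. By \Cref{lem:derivative},
\[ \Delta(t)=\int_0^t \frac{y_i(s)^*Ex_i(s)}{1+\ell_i^{(0)}(s)^*r_i^{(0)}(s)}\,ds .\]
Since $E(i,i)=0$ (\Cref{lem:zerodiag}), the zeroth-order term $e_i^*Ee_i$ vanishes.

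Next we split the numerator using the first-order eigenvector expansions $r_i^{(0)}(s)=-sB(:,i)+r_i^{(1)}(s)$ and $\ell_i^{(0)}(s)^*=sB(i,:)+(\text{residual})$. This gives
\[ y_i^*Ex_i = e_i^*Er_i^{(0)}+\ell_i^{(0)*}Ee_i+\ell_i^{(0)*}Er_i^{(0)} .\]
The two linear pieces are exactly where the gap enters. Because $Ee_i=(D-\lambda_iI)B(:,i)$ and $e_i^*E=B(i,:)(\lambda_iI-D)$, the eigenvector equations \Cref{eq:eigeneq} let us rewrite them. For instance, $e_i^*Er_i^{(0)}(s)$ equals $\Delta(s)$ times a quantity of size $sB(i,:)\cdot$(small), plus terms of order $s\|E\|\|B(i,:)\|$.

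The cleanest route is to bound $|e_i^*Er_i^{(0)}(s)|\le \|E\|\,\|r_i^{(0)}(s)\|$ directly with the linear bound of \Cref{thm:boundeigenvec}, which gives $s\|E\|\|B(:,i)\|/(1-2\rho_i)$. The left term is treated symmetrically with $\|B(i,:)\|$. Averaging the two symmetric representations (one for $x$, one for $y$) produces the factor $\tfrac{\|B(i,:)\|+\|B(:,i)\|}{2}$, and $\int_0^t s\,ds=t^2/2$ accounts for the remaining normalization. The bilinear term is bounded by $s^2\|E\|\|B(i,:)\|\|B(:,i)\|/(1-2\rho_i)^2$, and integrating $s^2$ yields the $t^3/3$ term. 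The denominator $1+\ell^*r$ is bounded below by $1-\|B(i,:)\|\|B(:,i)\|/(1-2\rho_i)$, again using the linear bounds.

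The main obstacle is that the linear bound in \Cref{thm:boundeigenvec} involves $|\Delta(s)|$ through the factor $1-(|\Delta(s)|+s\|E\|)/\eta_i$. We first use Bauer--Fike, $|\Delta(s)|\le s\|E\|\le\|E\|$, to replace this factor by $1-2\rho_i$; this is where the $2\|E\|/\eta_i$ in the statement comes from. It remains to check that this crude replacement does not destroy the quadratic order. It does not, since $\Delta$ then enters only the denominators and not the leading term. A bootstrap or Gr\"onwall-type refinement is not needed for the stated bound.

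The final simplification substitutes $\|B(:,i)\|,\|B(i,:)\|\le\rho_i$ from \Cref{lem:compB}, together with $t\le1$, into the first bound to obtain the second expression. The only care needed is to track which $(1-2\rho_i)$ factors appear in the numerator and which in the denominator.
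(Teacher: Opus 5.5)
Your proposal is essentially the paper's proof: the same splitting $x_i(s)=e_i+r_i^{(0)}(s)$, $y_i(s)=e_i+\ell_i^{(0)}(s)$ inside the derivative formula of \Cref{lem:derivative}, the same use of the linear bound of \Cref{thm:boundeigenvec} with Bauer--Fike to produce the factor $1-2\rho_i$, and the same integration giving $t^2/2$ and $t^3/3$. Only the bookkeeping needs tidying: the two linear terms are summed rather than averaged (the $\tfrac12$ comes solely from $\int_0^t s\,ds$); the linear bounds give $|1+\ell_i^{(0)}(s)^*r_i^{(0)}(s)|\ge 1-s^2\|B(i,:)\|\|B(:,i)\|/(1-2\rho_i)^2$ rather than your $1-\|B(i,:)\|\|B(:,i)\|/(1-2\rho_i)$, so the stated denominator appears only after multiplying numerator and denominator by $1-2\rho_i$ and using $s\le 1$; and the final substitution $\|B(i,:)\|,\|B(:,i)\|\le\rho_i$ yields $\rho_i^2/(1-2\rho_i)$ where the statement prints $\rho_i^2/(1-\rho_i)$, which appears to be a typo in the statement.
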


\begin{proof}

The proof relies on the formula for the derivative of eigenvalues \Cref{lem:derivative} $\frac{d\lambda_i(t)}{dt} = \frac{y_i(t)^*Ex_i(t)}{y_i(t)^*x_i(t)}.$
Consider first the decompositions
\begin{align*}
    x_i(t) &= x_i^{(0)} + r_i^{(0)}(t)\\
    y_i(t) &=  y_i^{(0)}+\ell^{(0)}_i(t).
\end{align*}
We can rewrite the derivative as
\[ \frac{d\lambda_i(t)}{dt} = \frac{y_i(t)^*Ex_i(t)}{y_i(t)^*x_i(t)} = \frac{(y_i^{(0)}+\ell^{(0)}_i(t))^*E(x_i^{(0)} + r_i^{(0)}(t))}{(y_i^{(0)}+\ell^{(0)}_i(t))^*(x_i^{(0)} + r_i^{(0)}(t))} =\] 
\[ = \frac{\ell^{(0)}_i(t)^*Ex_i^{(0)} + y_i^{(0)}Er_i^{(0)}(t) + \ell_i^{(0)}(t)^*Er_i^{(0)}(t)}{1 + \ell_i^{(0)}(t)^*r_i^{(0)}(t)}\]
where we have used that $x_i^{(0)}, y_i^{(0)} \perp r_i^{(0)}(t), \ell_i^{(0)}(t)$.
From \Cref{thm:boundeigenvec} we have
\begin{align*}
    \|r_i^{(0)}(t)\| &\le \frac{t\|B(:,i)\|}{1 - \frac{2\|E\|}{\eta_i}}\\
    \|\ell_i^{(0)}(t)\| &\le \frac{t\|B(i,:)\|}{1 - \frac{2\|E\|}{\eta_i}}
\end{align*}
which implies
\[ \left|\frac{d\lambda_i(t)}{dt}\right| \le \frac{t\|E\|(\|B(i,:)\| + \|B(:,i)\|) + \frac{t^2\|E\|\|B(i,:)\|\|B(:,i)\|}{\left(1 - \frac{2\|E\|}{\eta_i}\right)}}{1 - \frac{2\|E\|}{\eta_i} - \frac{\|B(i,:)\|\|B(:,i)\|}{\left(1 - \frac{2\|E\|}{\eta_i}\right)}}.\]
By integration, we get the bound.
\end{proof}

This theorem implies that the perturbation on the eigenvalues is quadratic in $\|E\|$ and in particular justifies that last inequality in \Cref{thm:boundeigenvec} is $\mathcal{O}(\|E\|^3)$.

\subsection{Further improvements for eigenvalue bounds}

As we have at our disposal non-asymptotic bounds for eigenvectors up to a second order correction, we should in principle be able to plug them into the expression of the derivative of eigenvalues and get non-asymptotic bound for eigenvalues of high order. Unfortunately, the number of terms in the denominator of \Cref{lem:derivative} grows quadratically with the order of the bound on the eigenvectors and the calculation is cumbersome. Fortunately, If we make a further assumption on $\rho_i$, for example
\[ \rho_i \le \frac{1}{4}\]
we may in principle prove two other theorems.

\begin{theorem}
Let $D \in \C^{n\times n}$ be a diagonal matrix, and $E \in \C^{n\times n}$ with $diag(E) = 0$ and the matrix $B \in \C^{n \times n}$ such that $E = AB-BA$ and $diag(B) = 0$. Let $\lambda_i$ the $i$-th eigenvalue of $D$ and $\lambda_i(1)$ the $i$-th eigenvalue of $D+E$. Then, if
\[ \rho_i = \frac{\|E\|}{\eta_i} \leq \frac{1}{4}\]
it holds that
\[ |\lambda_i(1) - \lambda_i -B(i,:)E(:,i)| \leq C_1 \frac{\|E\|^3}{\eta_i^2} = C_1\|E\|\rho_i^2\]
where $C_1$ is a constant independent from all the quantities involved.
\end{theorem}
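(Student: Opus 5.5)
We integrate the derivative formula of \Cref{lem:derivative} over $t\in[0,1]$, exactly as in the proof of \Cref{thm:boundeige2}, but now expand the eigenvectors one order further. Normalize $x_i(t)$ and $y_i(t)$ so that their $i$-th entries equal $1$, and write
\[ x_i(t) = e_i - tB(:,i) + r_i^{(1)}(t), \qquad y_i(t)^* = e_i^* + tB(i,:) + \ell_i^{(1)}(t)^*. \]
By \Cref{thm:boundeigenvec} and its analogue for $y_i$, with $\rho_i\le 1/4$ (so $1-2t\rho_i\ge 1/2$), we have $\|r_i^{(1)}(t)\|,\|\ell_i^{(1)}(t)\|\le 4(t\rho_i)^2$ and $\|r_i^{(0)}\|,\|\ell_i^{(0)}\|\le 2t\rho_i$. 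Because $e_i^*Ee_i=0$ and both corrections are orthogonal to $e_i$, the denominator is
\[ y_i(t)^*x_i(t) = 1+\ell_i^{(0)}(t)^*r_i^{(0)}(t) = 1+\mathcal{O}(t^2\rho_i^2), \]
and it is bounded below by $1-4\rho_i^2\ge 3/4$.

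Next we expand the numerator:
\[ y_i(t)^*Ex_i(t) = -t\,e_i^*EB(:,i) + t\,B(i,:)Ee_i + e_i^*E\,r_i^{(1)}(t) + \ell_i^{(1)}(t)^*Ee_i + \ell_i^{(0)}(t)^*E\,r_i^{(0)}(t). \]
The key algebraic observation is that $E=DB-BD$ and $\operatorname{diag}(B)=0$ give
\[ e_i^*EB(:,i)=\sum_j E_{ij}B_{ji}=\sum_j(\lambda_i-\lambda_j)B_{ij}B_{ji}, \qquad B(i,:)Ee_i=\sum_j B_{ij}(\lambda_j-\lambda_i)B_{ji}. \]
Hence $B(i,:)E(:,i)=-e_i^*EB(:,i)$, and the linear-in-$t$ term equals $2t\,B(i,:)E(:,i)$. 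Integrating from $0$ to $1$ produces exactly the main term $B(i,:)E(:,i)$. The remaining numerator terms are bounded by $\|E\|\cdot 4(t\rho_i)^2$ each, and by $\|E\|(2t\rho_i)^2$ for the cross term, so they contribute $\mathcal{O}(\|E\|\rho_i^2)$ after integration.

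Finally, the denominator correction is handled by writing
\[ \frac{N}{1+\delta} = N - \frac{N\delta}{1+\delta}, \qquad |\delta|\le 4t^2\rho_i^2. \]
Here $|N|\le \|E\|\cdot\mathcal{O}(t\rho_i)$, using $\|B(:,i)\|,\|B(i,:)\|\le\rho_i$ from \Cref{eq:Bi}. This error is $\mathcal{O}(\|E\|\rho_i^3)\le \mathcal{O}(\|E\|\rho_i^2)$. Collecting the explicit constants (all absolute, given $\rho_i\le1/4$) yields $C_1$.

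The main obstacle is justifying the use of \Cref{thm:boundeigenvec} with $|\Delta(t)|$ controlled uniformly in $t$: the bound on $r_i^{(1)}$ involves $|\Delta(t)|+t\|E\|$, and we need this to be at most a multiple of $t\|E\|$. We would apply \Cref{thm:boundeige2}, which gives $|\Delta(t)|\le C t^2\|E\|\rho_i\le Ct\|E\|$ under $\rho_i\le1/4$. We would also argue, by continuity in $t$ and the gap condition $2\rho_i<1$, that $\lambda_i(t)$ stays simple and the normalization $e_i^*x_i(t)=1$ remains possible on all of $[0,1]$.
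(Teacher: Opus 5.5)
Your proof is correct and follows the route the paper indicates for this result: insert the eigenvector bounds of \Cref{thm:boundeigenvec}, taken one order further than in \Cref{thm:boundeige2}, into the derivative formula of \Cref{lem:derivative} and integrate. The paper only asserts the theorem without writing out that argument, and your identity $B(i,:)E(:,i)=-e_i^*EB(:,i)$, which makes the linear-in-$t$ numerator term integrate to exactly $B(i,:)E(:,i)$, is the step that completes it. The obstacle you flag is not really one: since $2t\|E\|\le\eta_i/2$, Bauer--Fike plus disc isolation already gives $|\Delta(t)|\le t\|E\|$ and simplicity of $\lambda_i(t)$ on $[0,1]$, which is all \Cref{thm:boundeigenvec} needs.
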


\begin{theorem}
    Given a diagonal matrix $D \in \C^{n\times n}$, a generic matrix $E \in \C^{n\times n}$ with $diag(E) = 0$ and the matrix $B \in \C^{n \times n}$ such that $E = AB-BA$ and $diag(B) = 0$.  Let $\lambda_i$ be the $i$-th eigenvalue of $D$ and $\lambda_i(1)$ the $i$-th eigenvector of $D+E$. Then, if
    \[ \frac{\|E\|}{\eta_i} \leq \frac{1}{4}\]
    It holds that
    \[ |\lambda_i(1) -\lambda_i - B(i,:)E(:,i) +B(i,:)EB(:,i)| \leq C_2\frac{\|E\|^4}{\eta_i^3}\]
    where $C_2$ is a constant independent from all the quantities involved.
\end{theorem}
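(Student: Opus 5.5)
We prove the fourth-order statement by integrating the eigenvalue derivative of \Cref{lem:derivative}, now feeding in the second-order eigenvector expansions of \Cref{thm:boundeigenvec} in place of the zeroth-order ones used for \Cref{thm:boundeige2}. With the normalization $e_i^*x_i(t)=e_i^*y_i(t)=1$, write
\[ x_i(t) = e_i - tB(:,i) + t^2x_i^{(2)} + r_i^{(2)}(t),\qquad y_i(t)^* = e_i^* + tB(i,:) + t^2 y_i^{(2)*} + \ell_i^{(2)}(t)^*, \]
where $y_i^{(2)*}=-B(i,:)E(D-\lambda_iI)^{\dagger}$. The correction terms are orthogonal to $e_i$. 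The denominator is therefore $y_i(t)^*x_i(t) = 1 + (y_i(t)-e_i)^*(x_i(t)-e_i) = 1 - t^2B(i,:)B(:,i) + \mathcal{O}(t^3\rho_i^3)$.

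We expand the numerator $y_i(t)^*Ex_i(t)$, using $\mathrm{diag}(E)=0$ so that $e_i^*Ee_i=0$.
\begin{itemize}
\item The linear terms give $2tB(i,:)E(:,i)$. Here we use $-e_i^*EB(:,i) = -E(i,:)B(:,i)$, and we note $E(i,:)B(:,i) = -B(i,:)E(:,i)$, which follows from $E=DB-BD$ with zero diagonal: indeed $\sum_j E_{ij}B_{ji} = \sum_j (\lambda_i-\lambda_j)B_{ij}B_{ji} = -\sum_j B_{ij}E_{ji}$.
\item The quadratic terms are $-t^2B(i,:)EB(:,i)$ plus the second-order pieces $t^2(e_i^*Ex_i^{(2)} + y_i^{(2)*}Ee_i)$. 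Using $(D-\lambda_iI)^\dagger E(:,i)=B(:,i)$ and its row analogue, these combine to $-3t^2B(i,:)EB(:,i)$.
\end{itemize}
The target formula is the derivative of $t^2B(i,:)E(:,i) - t^3B(i,:)EB(:,i)$, namely $2tB(i,:)E(:,i) - 3t^2B(i,:)EB(:,i)$. Verifying exactly these coefficients is the first, purely algebraic, step. Everything else must then be shown to be bounded by $C t^3\|E\|\rho_i^3$.

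The remainder has three sources, which we bound in turn.
\begin{itemize}
\item \emph{Third-order cross terms} such as $t^3B(i,:)Ex_i^{(2)}$ and $t^3 y_i^{(2)*}EB(:,i)$. These are directly $\mathcal{O}(t^3\|E\|\rho_i^3)$, since $\|x_i^{(2)}\|\le\rho_i^2$.
\item \emph{Terms involving the residuals}, such as $e_i^*Er_i^{(2)}(t)$. Here we use the cubic bound of \Cref{thm:boundeigenvec}:
\[ \|r_i^{(2)}(t)\| \le \Big(t\tfrac{|\Delta(t)|}{\eta_i}\rho_i + 2(t\rho_i)^3\Big)/(1-2t\rho_i). \]
We insert $|\Delta(t)|\le C t^2\|E\|\rho_i$ from \Cref{thm:boundeige2}, which gives $\|r_i^{(2)}\| = \mathcal{O}(t^3\rho_i^3)$. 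The lower-order residuals $r_i^{(0)}, r_i^{(1)}$ appear only in products that are already of high enough order.
\item \emph{The denominator.} We divide by $1 - t^2B(i,:)B(:,i) + \dots$ via a geometric expansion. Since the numerator starts at order $t\|E\|\rho_i$, the $t^2\rho_i^2$ correction in the denominator contributes $\mathcal{O}(t^3\|E\|\rho_i^3)$.
\end{itemize}
The hypothesis $\rho_i\le 1/4$ keeps every factor $1/(1-2\rho_i)$, and the denominator, bounded by absolute constants.

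Integrating $\frac{d\lambda_i}{dt} - \big(2tB(i,:)E(:,i) - 3t^2B(i,:)EB(:,i)\big) = \mathcal{O}(t^3\|E\|\rho_i^3)$ over $[0,1]$ gives the claim with $C_2$ absolute. The main obstacle is the bookkeeping in the second step. Every term of the product of two four-term expansions, divided by the perturbed denominator, must be shown to be genuinely $\mathcal{O}(\|E\|\rho_i^3)$ with constants independent of $n$ and $\eta_i$. The most delicate point is $r_i^{(2)}$, whose bound is cubic only after the quadratic eigenvalue bound \Cref{thm:boundeige2} has been substituted. We would first establish the previous theorem (the $C_1$ statement) in the same way, truncating at first order, and then reuse its estimate for $|\Delta(t)|$.
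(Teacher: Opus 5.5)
Your proposal is correct and follows exactly the route the paper only sketches. The paper states this theorem without proof, remarking only that one should plug the second-order eigenvector bounds into the derivative formula of \Cref{lem:derivative} under $\rho_i\le 1/4$ and integrate. In your version the derivative coefficients $2tB(i,:)E(:,i)-3t^2B(i,:)EB(:,i)$ check out. The cubic residual bound of \Cref{thm:boundeigenvec}, combined with $|\Delta(t)|\le Ct^2\|E\|\rho_i$ from \Cref{thm:boundeige2}, then makes every remainder $\mathcal{O}(t^3\|E\|\rho_i^3)$ with absolute constants, so you have in effect supplied the omitted proof.
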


\section{Generic perturbation $E$}\label{sec:gener}

In this section, we generalize previous results to the case where the perturbation $E$ does not lie in the tangent space. As before, we will reduce the problem to the diagonal case and consider the diagonal matrix $D$. Define $B$ as 
\[ DB-BD = E -diag(E)\]
where $diag(E)$ is the diagonal matrix with diagonal equal to the diagonal of $E$.
Note that 
\[ E = (E -diag(E)) +diag(E)\]
is an additive decomposition in
\[ \mathcal{T}_D{\mathcal{M}} \times \mathcal{T}_D{\mathcal{M}}^{\perp}\]
where $\mathcal{M}$ is the isospectral manifold and $\mathcal{T}_D{\mathcal{M}}$ is the tangent space in $D$. In fact, if we use the natural metric,
\[ \langle (E -diag(E),diag(E)\rangle  = \operatorname{tr}((E -diag(E))^*diag(E)) = 0\]
as we have the product with a zero diagonal matrix and a diagonal matrix has a zero diagonal, and by the characterization \Cref{lem:zerodiag}, the first matrix is in the tangent space. 
Consider again \Cref{eq:eigeneq}. If $E$ has non zero diagonal, the new equations are
\begin{align*}
    \begin{split}
    (D-\lambda_iI)r_i^{(0)}(t) &= \Delta(t)r_i^{(0)}(t) + tEr_i^{(0)}(t) + t(E -E(i,i)I)e_i\\
     (D-\lambda_iI)r_i^{(1)}(t) &= \Delta(t)r_i^{(0)}(t) -t\Pi_i[Er_i^{(0)}(t)]\\
     (D-\lambda_iI)r_i^{(2)}(t) &= (\Delta(t)-tE(i,i))r_i^{(0)}(t)-t\Pi_i[(E- E(i,i)I)r_i^{(1)}(t)].
    \end{split}
\end{align*}

Using this equations, we can generalize \Cref{thm:boundeigenvec} to this case and also the bound on eigenvalues, keeping in mind that now the asymptotic expansions of eigenvectors are
\[  x_i(t) \approx e_i -tB(:,i) +t^2(D-\lambda_iI)^{-1}EB(:,i) + t^2E(i,i)(D-\lambda_iI)^{-1}B(:,i)\]
while the expansions for eigenvalues are 

\[ \lambda_i(t) \approx \lambda_i + tE(i,i) + t^2B(i,:)E(:,i).\]

\section{$2\times2$ block-diagonal matrices}\label{sec:2x2}

Assume now that $D$ is diagonal, but divided into two blocks

\[ D = \begin{bmatrix}
    D_1 & 0 \\
    0 & D_2
\end{bmatrix}.\]
Inside each block, eigenvalues can be multiple or close to each other. In particular, we assume for the moment that $D_1 = \lambda_1I$. Assume that $\lambda_1$ has a distance $gap$ from the spectrum of $D_2$, that is
\[ gap = \min_{ \lambda \in \Lambda(D_2)} |\lambda_1 - \lambda|.\]
Consider the perturbation
\[ E = \begin{bmatrix}
    0 & E_1 \\
    E_2 & 0
\end{bmatrix}\]
where the dimensions of $E_1$ and $E_2$ match the partitioning of $D$. In this case, even if we have multiple eigenvalues, the matrix $B$ is well defined. In particular $B$ is (can be chosen) equal to 
\[ B = \begin{bmatrix}
    0 & B_1 \\
    B_2 & 0
\end{bmatrix}\]
and
\[
    E_1 = D_1B_1 - B_1D_2,\qquad  E_2 = D_2B_2 - B_2D_1\]
Note that, unlike the general case, the entries of $B$ only involve differences between eigenvalues that belong to different blocks (in this case, only differences between $\lambda_1$ and one eigenvalue of $D_2$). 
In this setting, we want to give a first order approximation to the eigenspace relative to the eigenvectors of the first block, similar to what we have done before. Note that, as the first eigenvalue is multiple, we are not able to give any result on the single eigenvector, but we can only analyse the eigenspace on the whole.
Similarly to the single eigenvector, we want to prove that the eigenspace is well approximated by
\[ span\left(\begin{bmatrix}
    I\\
    -tB_2
\end{bmatrix}\right)\]
which is the block analogous of the first order expansion $e_i -tB(:,i)$ that we had for $x_i(t)$.

Let $x_i(t)$ be the normalized right eigenvector relative to $\lambda_i(t)$. Consider the decomposition of $x_i(t)$ as
\begin{equation}\label{eq:xidecomp}
x_i(t) = \begin{bmatrix}
    I\\
    -tB_2
\end{bmatrix}x_i^{(top)}(t) + \begin{bmatrix}
    0\\
    r_i(t)
\end{bmatrix}
\end{equation}
where $x_i^{(top)}(t)$ is the vector of the same length as the width of $D_1$ whose entries are the first entries of $x_i(t)$. We want to show that $r_i(t)$ is bounded by a quadratic term in $\|E\|$.
We consider again the eigenvector equation
\[ (D+tE)x_i(t) = \lambda_i(t)x_i(t)\]
\[ (D+tE)(\begin{bmatrix}
    I\\
    -tB_2
\end{bmatrix}x_i^{(top)}(t) + \begin{bmatrix}
    0\\
    r_i(t)
\end{bmatrix}) = \lambda_i(t)(\begin{bmatrix}
    I\\
    -tB_2
\end{bmatrix}x_i^{(top)}(t) + \begin{bmatrix}
    0\\
    r_i(t)
\end{bmatrix}).\]
If we look at the last rows, we get
\begin{equation}\label{eq:lastrows}
(D_2 - \lambda_i(t)I)r_i(t) = tB_2(D_1 - \lambda_i(t)I)x_i^{(top)}(t)
\end{equation}
where we have used the definition of $E_2$ to rewrite $D_2B_2$. This implies
\[ \| r_i(t)\| \le \frac{t^2\|B_2\|\|E\|}{gap -\|E\|}\]
where we have used Bauer-Fike to bound $\|D_1-\lambda_i(t)I\|$ and $\|D_2-\lambda_i(t)I\|$.
We have proven the following theorem.

\begin{theorem}\label{thm:blkdiag}
    Consider a diagonal matrix $D \in \C^{n \times n}$ of the form
    \[ D = \begin{bmatrix}
    D_1 & 0 \\
    0 & D_2
\end{bmatrix}\]
where $D_1 = \lambda_1I$ and let
\[ gap = \min_{ \lambda \in \Lambda(D_2)} |\lambda_1 - \lambda|>0\]
be the spectral gap.
Let $E$ and $B$ have the following partition
\[ E = \begin{bmatrix}
    0 & E_1 \\
    E_2 & 0
\end{bmatrix}, \quad B = \begin{bmatrix}
    0 & B_1 \\
    B_2 & 0
\end{bmatrix}\]
where the block sizes match those of $A$, and $B_1$ satisfies
\[
    E_1 = D_1B_1 - B_1D_2, \quad  E_2 = D_2B_2-B_2D_1.\]
Let $\mathcal{X}$ be
\[ \mathcal{X} = span\left(\begin{bmatrix}
    I\\
    -B_2
\end{bmatrix}\right)\]
Then, for every normalized right eigenvector $x_i(t)$ of  $D + tE$ relative to a perturbed eigenvalue of the upper-left block of $D$, we have 
\[ \|\Pi_{\mathcal{X}^{\perp}}x_i(t)\|\le  \frac{\|B_2\|\|E\|}{gap -\|E\|}\]
where $\Pi_{\mathcal{X}^{\perp}}$ is the projection on $\mathcal{X}^\perp$.

\end{theorem}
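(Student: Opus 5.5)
The plan is to follow the derivation preceding the statement and make each step rigorous. The statement writes $\mathcal{X}$ with $-B_2$, while \Cref{eq:xidecomp} uses $-tB_2$. I will prove the bound for the $t$-dependent subspace $\mathcal{X}_t=\operatorname{span}\bigl(\bigl[\begin{smallmatrix} I\\ -tB_2\end{smallmatrix}\bigr]\bigr)$, with the sharper right-hand side $t^2\|B_2\|\|E\|/(\mathrm{gap}-t\|E\|)$. This coincides with the stated $\mathcal{X}$ at $t=1$, which I take to be the intended reading. For $t<1$ the sharper quantity is at most the stated one, but the subspace $\mathcal{X}_t$ differs from the stated $\mathcal{X}$, so the literal statement with $-B_2$ is only covered at $t=1$. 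Throughout I assume $\mathrm{gap}>\|E\|$, since otherwise the bound is vacuous.

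\textbf{Decomposition.} Write the normalized eigenvector $x_i(t)$ as in \Cref{eq:xidecomp}:
\[
x_i(t)=\begin{bmatrix} I\\ -tB_2\end{bmatrix}x_i^{(top)}(t)+\begin{bmatrix}0\\ r_i(t)\end{bmatrix}.
\]
Here $x_i^{(top)}(t)$ is the leading block of $x_i(t)$, so $\|x_i^{(top)}(t)\|\le\|x_i(t)\|=1$. The first term lies in $\mathcal{X}_t$, so
\[
\Pi_{\mathcal{X}_t^\perp}x_i(t)=\Pi_{\mathcal{X}_t^\perp}\begin{bmatrix}0\\ r_i(t)\end{bmatrix}.
\]
Orthogonal projections do not increase norms, hence $\|\Pi_{\mathcal{X}_t^\perp}x_i(t)\|\le\|r_i(t)\|$. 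It therefore suffices to bound $\|r_i(t)\|$.

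\textbf{Bottom block rows.} Take the bottom block rows of $(D+tE)x_i(t)=\lambda_i(t)x_i(t)$:
\[
D_2\bigl(-tB_2x_i^{(top)}+r_i\bigr)+tE_2x_i^{(top)}=\lambda_i(t)\bigl(-tB_2x_i^{(top)}+r_i\bigr).
\]
Substitute $E_2=D_2B_2-B_2D_1$. The terms $\pm tD_2B_2x_i^{(top)}$ cancel, leaving
\[
(D_2-\lambda_i(t)I)\,r_i(t)=t\,B_2\bigl(D_1-\lambda_i(t)I\bigr)x_i^{(top)}(t).
\]
This is \Cref{eq:lastrows}, which I will verify directly, including the sign. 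Since $D_1=\lambda_1I$, the right-hand side equals $t(\lambda_1-\lambda_i(t))B_2x_i^{(top)}(t)$.

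\textbf{Localizing the eigenvalue.} This is the step needing the most care. $D$ is normal, so Bauer--Fike places every eigenvalue of $D+tE$ within $t\|E\|$ of $\Lambda(D)$. I must also show that an eigenvalue counted as a perturbation of the upper-left block satisfies $|\lambda_i(t)-\lambda_1|\le t\|E\|$, rather than lying in a disk around some eigenvalue of $D_2$.

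I would argue by continuity in $t$. Consider the union of disks of radius $t\|E\|$ around $\lambda_1$, and the union of disks around the eigenvalues of $D_2$. When $\mathrm{gap}>2t\|E\|$ these two sets are disjoint. The eigenvalues depend continuously on $t$, so the number of eigenvalues in each set is constant in $t$. Thus the $\dim D_1$ eigenvalues that start at $\lambda_1$ at $t=0$ remain in the disk around $\lambda_1$.

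If only $\|E\|<\mathrm{gap}$ is assumed, I would instead take $|\lambda_i(t)-\lambda_1|\le t\|E\|$ as the definition of ``relative to the upper-left block.'' That is what the paper implicitly does.

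\textbf{Conclusion.} With this localization, the distance from $\lambda_i(t)$ to any eigenvalue of $D_2$ is at least $\mathrm{gap}-t\|E\|>0$. Since $D_2$ is diagonal, $D_2-\lambda_i(t)I$ is invertible with $\|(D_2-\lambda_i(t)I)^{-1}\|\le 1/(\mathrm{gap}-t\|E\|)$. Combining with the bottom-row identity and $\|x_i^{(top)}(t)\|\le 1$ gives
\[
\|r_i(t)\|\le\frac{t\cdot t\|E\|\cdot\|B_2\|}{\mathrm{gap}-t\|E\|}=\frac{t^2\|B_2\|\|E\|}{\mathrm{gap}-t\|E\|}.
\]
At $t=1$ this is exactly the bound in \Cref{thm:blkdiag}, and by the projection step it bounds $\|\Pi_{\mathcal{X}^\perp}x_i(t)\|$.
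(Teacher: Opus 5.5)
Your proposal is correct and follows the paper's own argument: the same decomposition \Cref{eq:xidecomp}, the same bottom-block identity \Cref{eq:lastrows} (your sign check agrees), and the same Bauer--Fike estimates for $|\lambda_1-\lambda_i(t)|$ and $\|(D_2-\lambda_i(t)I)^{-1}\|$. Your additions are sound and fill in steps the paper leaves implicit: the continuity argument localizing $\lambda_i(t)$ near $\lambda_1$, the explicit step $\|\Pi_{\mathcal{X}^\perp}x_i(t)\|\le\|r_i(t)\|$, and reading $\mathcal{X}$ as $\operatorname{span}\bigl[\begin{smallmatrix} I\\ -tB_2\end{smallmatrix}\bigr]$, which is the right reading because the literal $-B_2$ version fails for $t<1$.
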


\subsection{$D_1$ not a multiple of the identity}\label{sub:d1notidentity}

Here, we provide a similar bound assuming that $D_1$ is diagonal with eigenvalues in the closed ball 
\[ \bar{B}(\lambda_1; \Delta/2)\]
that is, we allow $D_1$ not to be a multiple of the identity, but we assume that $\Delta$ is comparable to $\|E\|$ or smaller.
From \Cref{eq:lastrows}
\[ (D_2 - \lambda_i(t))r_i(t) = tB_2(D_1 - \lambda_i(t))x_i^{(top)}(t)\]
where $r_i(t)$ and $x_i^{(top)}(t)$ are defined as in \Cref{eq:xidecomp},
we obtain
\[ \| r_i(t)\| \le \frac{t^2\|B_2\|(\|E\| + \Delta)}{gap -\|E\|}\]
where we have used Bauer-Fike. 
Note that, in this setting, even if $\|E_1\|$ in the numerator is improved with a quadratic term (i.e. we use a stronger result than Bauer-Fike), the term
\[ t^2\|B_2\|\Delta\]
cannot be improved. 
We provide stronger results if the matrices are Hermitian in \Cref{sub:2x2herm}.

\section{Hermitian matrices}\label{sec:hermi}

\subsection{A first simple bound for Hermitian matrices}
In this section, we show how the situation simplify if the matrix $A$ is Hermitian. The following simple result holds without assuming $A$ is diagonal or without assuming separation in the spectrum.

\begin{theorem}
Let $A \in \C^{n \times n}$ be Hermitian and $B \in \C^{n \times n}$ be skew-Hermitian.
Define 
\[ E := AB-BA.\]
As $B$ is skew-Hermitian, $E$ is Hermitian.
Then  
\[
|\lambda_i(A+E) - \lambda_i(A)|\leq \|B\|_2\|AB-BA\|_2 = \|B\|_2\|E\|_2.
\]  
\end{theorem}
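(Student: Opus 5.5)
The idea is to compare $A+E$ with a matrix that is exactly unitarily similar to $A$ and then apply Weyl's inequality. Since $B$ is skew-Hermitian, $U := e^{-B}$ is unitary, so $\tilde A := e^{-B}Ae^{B}$ is Hermitian with exactly the same eigenvalues as $A$. Both $A+E$ and $\tilde A$ are Hermitian, so Weyl's theorem gives
\[ |\lambda_i(A+E)-\lambda_i(A)| = |\lambda_i(A+E)-\lambda_i(\tilde A)| \le \|A+E-\tilde A\|_2 .\]
It therefore suffices to show that $\|A+E-e^{-B}Ae^{B}\|_2 \le \|B\|_2\|E\|_2$.

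To bound this remainder, set $F(s) := e^{-sB}Ae^{sB}$. Then
\[ F'(s) = e^{-sB}(AB-BA)e^{sB} = e^{-sB}Ee^{sB}, \qquad F''(s) = e^{-sB}(EB-BE)e^{sB}. \]
Taylor's formula with integral remainder gives
\[ F(1) = A + E + \int_0^1 (1-s)\,e^{-sB}[E,B]e^{sB}\,ds .\]
Since $e^{\pm sB}$ is unitary, this yields $\|A+E-\tilde A\|_2 \le \tfrac12\|[E,B]\|_2 \le \|B\|_2\|E\|_2$, because $\|EB-BE\|_2\le 2\|B\|_2\|E\|_2$. The factor $\tfrac12$ from the integral exactly absorbs the factor $2$ from the commutator bound, which gives the claimed constant.

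The one delicate point is the sign convention: we must use $e^{-sB}Ae^{sB}$ rather than $e^{sB}Ae^{-sB}$, so that the first derivative is $+[A,B]=E$ and not $-E$. This is consistent with the paper's parametrization $X(t)\approx I-tB$. Beyond that, the argument is routine, and there is no hard step. No spectral gap or diagonal structure of $A$ is needed: Weyl's inequality handles the ordering of eigenvalues, and unitarity of $e^{sB}$ handles the norms.
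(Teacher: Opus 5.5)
Your proof is correct, and it takes a genuinely different route from the paper.

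The paper stays on the straight line $A+tE$. It uses the derivative formula $\lambda_i'(t)=x_i(t)^*Ex_i(t)$, substitutes $Ax_i(t)=(\lambda_i(t)I-tE)x_i(t)$ to turn the derivative into $t$ times a Rayleigh quotient of products of $B$ and $E$, bounds this by $2t\|B\|_2\|E\|_2$, and integrates. You instead move along the isospectral curve $e^{-sB}Ae^{sB}$, which is the constant-$B$ Lax flow from the introduction. You show that $A+E$ lies within $\tfrac12\|[E,B]\|_2$ of its endpoint and conclude with Weyl.

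Done carefully, the paper's substitution gives $\lambda_i'(t)=t\,x_i(t)^*(BE-EB)x_i(t)$. The displayed $BE+EB$ in the paper is a slip that does not affect its final bound. So both arguments in fact pass through the same intermediate quantity $\tfrac12\|[E,B]\|_2$.

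What your route buys:
\begin{itemize}
\item It never differentiates eigenvalues or eigenvectors. You therefore need no discussion of multiple eigenvalues, nonunique eigenvectors, or the only piecewise-analytic ordered eigenvalue curves; the paper handles these by citation.
\item It proves more than the statement. $A+E$ is a Hermitian perturbation of norm at most $\|B\|_2\|E\|_2$ of a matrix unitarily similar to $A$. This controls all eigenvalues at once, in any unitarily invariant norm via Lidskii--Mirsky.
\item Combined with Davis--Kahan, it points to the eigenvector approximations $e^{-B}x_i\approx x_i-Bx_i$, consistent with the paper's first-order expansion.
\end{itemize}

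What the paper's route buys: it keeps the exact eigenvector $x_i(t)$ inside the integrand. This is what the paper later exploits in \Cref{thm:blkeigen}, where inserting eigenvector approximations into $x_i(t)^*Ex_i(t)$ gives bounds involving only the relevant block $B_1$ and the local gap. A single global Weyl estimate does not provide that localization.
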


\begin{proof}

Suppose 
\begin{equation}  \label{eq:lamab}
(A+t(AB-BA))x_i(t) = \lambda_i(t)x_i(t)  
\end{equation}
where $x_i(t)$ is the normalized eigenvector associated with $\lambda_i(t)$ for all $t$ (the nonuniqueness of the eigenvector when $\lambda_i(t)$ is multiple appears to be an issue, but in fact it is not~\cite{Nak2012}).
Then, by standard eigenvalue perturbation theory, we have 
\[
\frac{d\lambda_i(t)}{dt} = x_i(t)^*(AB-BA)x_i(t).
\]
Now, using~\Cref{eq:lamab}, we have 
$Ax_i(t)  = (\lambda_i(t)I -t(AB-BA))x_i(t)$, so 
\begin{align*}
\frac{d\lambda_i(t)}{dt} 
&= x_i(t)^*(AB-BA)x_i(t)\\
&= ((\lambda_i(t)I -t(AB-BA))x_i(t))^*Bx_i(t)
- x_i(t)^*B (\lambda_i(t)I -t(AB-BA))x_i(t)\\
&= x_i(t)^*B((\lambda_i(t)I +t(AB-BA))x_i(t))
- x_i(t)^*B (\lambda_i(t)I -t(AB-BA))x_i(t)\\
&= tx_i^*(t)B(AB-BA)x_i(t) + tx_i^*(t)(AB-BA)Bx_i(t).
\end{align*}
We thus have 
$|\frac{d\lambda_i(t)}{dt} |\leq 2|t| \|B\|_2\|AB-BA\|_2.$
Therefore, 
\[
|\lambda_i(1)-\lambda_i(0)|\leq \int_{0}^12|t| \|B\|_2\|AB-BA\|_2dt
= \|B\|_2\|AB-BA\|_2 =\|B\|_2\|E\|_2 .
\]
\end{proof}

This bound is elegant and does not require diagonalizing the matrix or any result on eigenvectors. Additionally, the bound is quadratic, as
\[ \|B\|_2 \le \frac{\|E\|_F}{\eta}\]
where $\eta$ is defined as the global spectral gap
\[ \eta = \min_{\lambda_i,\ \lambda_j \in \Lambda(A), \lambda_i \neq \lambda_j} |\lambda_i - \lambda_j|\]
and $B$ has minimal norm among all the possible choices of $B$ (recall that $B$ has $n$ degrees of freedom).
On the other hand, this bound has a major limitation: assuming that $A$ is diagonal, the entries $B(i,j)$ scale with $\frac{1}{\lambda_i - \lambda_j}$. This implies that if we have a cluster of eigenvalues, this bound may be loose, even if $\lambda_i$ is well isolated from the rest. Not only that, if $A$ has multiple eigenvalues, we are implicitly assuming that $E$ has zero entries not only on the diagonal, but also in other principal minors (i.e. $E$ has to satisfy additional constraints). 

Before proceeding, it is worth noticing that if the matrix $A$ is Hermitian, the tangent space of the isospectral manifold always includes the skew-Hermitian matrices. This is because if $E$ is skew-Hermitian, then
\[ x^*Ex = 0 \qquad \forall x \in \C^n\]
in particular, it holds if $x$ is an eigenvector of $A$. The claim follows from \Cref{thm:chartang}.

\subsection{$2 \times 2$ block-diagonal Hermitian matrices}\label{sub:2x2herm}

In this section, we continue the argument in \Cref{sec:2x2}, and provide stronger results under the assumption of Hermiticity. We consider the Hermitian diagonal matrix
\[ D = \begin{bmatrix}
    D_1 & 0 \\
    0 & D_2
\end{bmatrix}\]
where $D_1 = \lambda_1I$.
Furthermore, the matrix $E$
\[ E = \begin{bmatrix}
    0 & E_1 \\
    E_2 & 0
\end{bmatrix}\]
is Hermitian, hence
\[ E_2 = E_1^* \quad \text{and} \quad B_2 =-B_1^*.\]
Consider the derivative \Cref{lem:derivative}
\[ \frac{d\lambda_i(t)}{dt} = x_i^*(t)Ex_i(t).\]
Using the decomposition in \Cref{eq:xidecomp}, this expression is equal to

\[ x_i^*(t)Ex_i(t) = (x_i^{(top)}(t)^*\begin{bmatrix}
    tB_1E_2 & E_1
\end{bmatrix}  + \begin{bmatrix}
    l_i(t)^*E_2 & 0
\end{bmatrix})( \begin{bmatrix}
    I\\
    -tB_2
\end{bmatrix}x_i^{(top)}(t) + \begin{bmatrix}
    0\\
    r_i(t)
\end{bmatrix}) =\]
\[ 
= tx_i^{(top)}(t)^*(
    B_1E_2 - E_1B_2)x_i^{(top)}(t) + x_i^{(top)}(t)^*E_1r_i(t) + l_i(t)^*E_2x_i^{(top)}(t).
\]
Using the result from \Cref{thm:blkdiag} (and noticing that, in this case, $\|E\| = \|E_1\|$) we have 
\[|x_i(t)^*Ex_i(t)| \le t \|B_1E_1^* + E_1B_1^*\| + \frac{2t^2\|E_1\|^2\|B_1\|}{gap - \|E_1\|}\]
Hence
\[ |\lambda_i(1) - \lambda_i| \leq \frac{1}{2}\|B_1E_1^* + E_1B_1^*\| + \frac{2}{3}\frac{\|E_1\|^2\|B_1\|}{gap-\|E_1\|}.\]
We have proven the following result:
\begin{theorem}\label{thm:blkeigen}
Given a Hermitian matrix $D$ of the form
    \[ D = \begin{bmatrix}
    D_1 & 0 \\
    0 & D_2
\end{bmatrix}\]
where $D_1 = \lambda_1I$.
Let $E$ be a Hermitian perturbation of the form
\[ E = \begin{bmatrix}
    0 & E_1 \\
    E_1^* & 0
\end{bmatrix}\]
and $B$ equal to
\[ B = \begin{bmatrix}
    0 & B_1 \\
    -B_1^* & 0
\end{bmatrix}\]
where $B_1$ satisfies
\[
    E_1 = D_1B_1 - B_1D_2. 
\]
Let $\lambda_i(1)$ the $i$-th eigenvalue of $D + E$ and let $\lambda_i$ the $i$-th eigenvalue of $D$, and assume that $\lambda_i$ is an eigenvalue of $D_1$, then
\[ |\lambda_i(1) - \lambda_i| \leq \frac{1}{2}\|B_1E_1^* + E_1B_1^*\| + \frac{2}{3}\frac{\|E_1\|^2\|B_1\|}{gap-\|E_1\|}.\]
\end{theorem}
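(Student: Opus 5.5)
The plan is to integrate the Hellmann–Feynman derivative of \Cref{lem:derivative} along the path $D+tE$, $t\in[0,1]$. Because $D+tE$ is Hermitian, left and right eigenvectors coincide, so for a unit eigenvector $x_i(t)$ belonging to the perturbed eigenvalue that emanates from $\lambda_1$ we have $\frac{d\lambda_i(t)}{dt}=x_i(t)^*Ex_i(t)$. As in \cite{Nak2012}, multiplicity causes no difficulty, since the derivative formula holds almost everywhere along the analytic branches. We write $x_i(t)$ in the form \Cref{eq:xidecomp}, namely $x_i(t)=\begin{bmatrix} I\\ -tB_2\end{bmatrix}x_i^{(top)}(t)+\begin{bmatrix}0\\ r_i(t)\end{bmatrix}$ with $B_2=-B_1^*$, and let $l_i(t)$ denote the bottom block of $x_i(t)$.

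Substituting this into $x_i^*Ex_i$ and using the block off-diagonal structure of $E$ gives three contributions: a leading term $t\,x_i^{(top)*}(B_1E_1^*+E_1B_1^*)x_i^{(top)}$, and two cross terms $x_i^{(top)*}E_1r_i$ and $l_i^*E_1^*x_i^{(top)}$. Since $\|x_i^{(top)}\|\le 1$, the leading term is bounded by $t\|B_1E_1^*+E_1B_1^*\|$.

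For the cross terms, \Cref{eq:lastrows} together with Bauer–Fike (which in the Hermitian case is Weyl's bound, giving $|\lambda_i(t)-\lambda_1|\le t\|E_1\|$ and $\|(D_2-\lambda_i(t)I)^{-1}\|\le 1/(gap-\|E_1\|)$) yields $\|r_i(t)\|\le t^2\|B_1\|\|E_1\|/(gap-\|E_1\|)$, exactly as in \Cref{thm:blkdiag}. Here $\|E\|=\|E_1\|$ and $\|B_2\|=\|B_1\|$. Each cross term is therefore at most $t^2\|E_1\|^2\|B_1\|/(gap-\|E_1\|)$.

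The main obstacle is the term containing $l_i$. It is only $O(t\|B_1\|)$, which is first order, so it cannot be bounded as the residual is. The right move is to rewrite it with $l_i=-tB_2x_i^{(top)}+r_i$: the first-order part $t\,x_i^{(top)*}B_1E_1^*x_i^{(top)}$ is absorbed into the leading symmetric term, which is why $B_1E_1^*$ appears there, and only the $r_i$ part, of the same size as the other cross term, remains. Summing the pieces gives $\bigl|\frac{d\lambda_i}{dt}\bigr|\le t\|B_1E_1^*+E_1B_1^*\|+2t^2\|E_1\|^2\|B_1\|/(gap-\|E_1\|)$, provided the denominator is positive.

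Integrating from $0$ to $1$ produces the factors $\frac12$ and $\frac23$, and hence the stated bound.
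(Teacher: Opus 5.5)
Your proposal is correct and is essentially the paper's proof. It integrates $\frac{d\lambda_i}{dt}=x_i(t)^*Ex_i(t)$, splits via \Cref{eq:xidecomp} into $t\,x_i^{(top)}(t)^*(B_1E_1^*+E_1B_1^*)x_i^{(top)}(t)$ plus two cross terms, bounds $r_i(t)$ by $t^2\|B_1\|\|E_1\|/(gap-\|E_1\|)$ from \Cref{eq:lastrows} and Weyl/Bauer--Fike, and integrates to get the factors $\frac12$ and $\frac23$.

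The only wrinkle is bookkeeping: because you define $l_i$ as the whole bottom block, your three-term list in the second paragraph counts $t\,x_i^{(top)*}B_1E_1^*x_i^{(top)}$ twice. Your later substitution $l_i=-tB_2x_i^{(top)}+r_i$ fixes this and leaves $r_i(t)^*E_1^*x_i^{(top)}(t)$ as the second cross term, which is exactly the paper's term; there $l_i$ is the left-eigenvector residual, equal to $r_i$ by Hermiticity.
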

Note that the term 
\[\frac{1}{2}\|B_1E_1^* + E_1B_1^*\| \]
hides inside $B_1$ information about the entire $D_2$ and not only the gap. This guarantees a much more precise bound when some eigenvalues of $D_2$ are far from $\lambda_1$ but the $gap$ is small.

\subsection{A further improvement}

Consider again \Cref{eq:lastrows}
\[ (D_2 - \lambda_i(t))r_i(t) = tB_2(D_1 - \lambda_i(t))x_i^{(top)}(t).\]
Instead of using Bauer-Fike, we can now use \Cref{thm:blkeigen} to bound the perturbation of eigenvalues. This yields

\[ \| r_i(t)\| \le \frac{1}{gap -\|E_1\|}\left(\frac{t^2}{2}\|B_1\|\|B_1E_1^*+E_1B_1^*\| + \frac{2}{3}t^3\frac{\|B_1\|^2\|E_1\|^2}{gap - \|E_1\|}\right).\]

\begin{theorem}\label{thm:blkvecherm}
Under the same hypothesis of \Cref{thm:blkeigen} and using the same notation as in \Cref{eq:xidecomp}, we have
\[ \| r_i(t)\| \le \frac{1}{gap -\|E_1\|}\left(\frac{t^2}{2}\|B_1\|\|B_1E_1^*+E_1B_1^*\| + \frac{2}{3}t^3\frac{\|B_1\|^2\|E_1\|^2}{gap - \|E_1\|}\right).\]
\end{theorem}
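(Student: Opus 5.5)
The bound follows from \Cref{eq:lastrows}, by replacing the Bauer--Fike estimate on $|\lambda_i(t)-\lambda_1|$ with the sharper bound of \Cref{thm:blkeigen}, now applied at a general time $t$. Since $D_1=\lambda_1 I$, the identity reads
\[ (D_2-\lambda_i(t)I)\,r_i(t) = t\,(\lambda_1-\lambda_i(t))\,B_2\,x_i^{(top)}(t). \]
Because $x_i(t)$ is normalized, $\|x_i^{(top)}(t)\|\le 1$. In the Hermitian case $B_2=-B_1^*$, so $\|B_2\|=\|B_1\|$. This gives
\[ \|r_i(t)\| \le \frac{t\,\|B_1\|\,|\lambda_i(t)-\lambda_1|}{\min_{\mu\in\Lambda(D_2)}|\mu-\lambda_i(t)|}. \]

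\textbf{Denominator.} Weyl's theorem gives $|\lambda_i(t)-\lambda_1|\le t\|E\|\le\|E_1\|$, using $t\le 1$ and $\|E\|=\|E_1\|$ for the block off-diagonal Hermitian $E$. Therefore every $\mu\in\Lambda(D_2)$ satisfies $|\mu-\lambda_i(t)|\ge gap-\|E_1\|$, assuming $gap>\|E_1\|$ as implicitly required.

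\textbf{Numerator.} Apply the argument of \Cref{thm:blkeigen} to the scaled perturbation $tE$. The matrix $tB$ solves the commutator equation for $tE$, since $E_1\mapsto tE_1$ and $B_1\mapsto tB_1$. Equivalently, integrate the derivative bound from the proof of \Cref{thm:blkeigen} only up to $t$:
\[ \Big|\frac{d\lambda_i(s)}{ds}\Big|\le s\|B_1E_1^*+E_1B_1^*\| + \frac{2s^2\|E_1\|^2\|B_1\|}{gap-\|E_1\|}, \]
which yields
\[ |\lambda_i(t)-\lambda_1|\le \frac{t^2}{2}\|B_1E_1^*+E_1B_1^*\| + \frac{2t^3}{3}\,\frac{\|E_1\|^2\|B_1\|}{gap-\|E_1\|}. \]
I prefer integrating the derivative because it avoids checking that the gap term rescales correctly. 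That derivative bound used $\|r_i(s)\|\le s^2\|B_1\|\|E_1\|/(gap-\|E_1\|)$ from \Cref{thm:blkdiag} with $s\le1$, so it holds for every $s\in[0,t]$.

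Substituting the numerator and denominator bounds into the estimate for $\|r_i(t)\|$ gives
\[ \|r_i(t)\|\le \frac{1}{gap-\|E_1\|}\Big(\frac{t^3}{2}\|B_1\|\|B_1E_1^*+E_1B_1^*\| + \frac{2t^4}{3}\frac{\|B_1\|^2\|E_1\|^2}{gap-\|E_1\|}\Big). \]
Since $t\le 1$, this is at most the stated bound, which has powers $t^2,t^3$.

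The main obstacle is the bookkeeping of the $t$-dependence, namely applying \Cref{thm:blkeigen} at intermediate times. I would handle it by the derivative integration above. A secondary point is that the eigenvalue $\lambda_i(t)$ may not be simple, so its differentiability must be justified; I would use the same remark as in the Hermitian proof, which cites \cite{Nak2012}.
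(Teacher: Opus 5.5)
Your proposal is correct and follows the paper's argument. The paper likewise starts from \Cref{eq:lastrows}, bounds the denominator by $gap-\|E_1\|$, and replaces the Bauer--Fike estimate of $|\lambda_i(t)-\lambda_1|$ with the bound of \Cref{thm:blkeigen}. Your more careful time-dependent bookkeeping, integrating the derivative bound only up to $t$, gives the sharper powers $t^3$ and $t^4$, and these imply the stated $t^2,t^3$ bound since $t\le 1$.
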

In other words,
\[ \|r_i(t)\| \le C\left(\frac{\|E\|}{gap}\right)^3\]
if $\frac{\|E\|}{\eta_i}$ is sufficiently smaller than $1$ and $C$ is a constant.
This tells us that the perturbation of the eigenspace is not quadratic, but cubic. This behaviour was noted also in \cite{Li2012} in the context of eigenvalue perturbation.

Note that, according to \Cref{sub:d1notidentity}, this improvement does not generalize if $D_1$ is not a multiple of the identity but rather has a cluster of eigenvalues of width comparable to $\|E\|$.

\begin{figure}
\centering
\begin{tikzpicture}

\begin{semilogyaxis}[
    width=0.5\textwidth,
    height=0.5\textwidth,
    grid=both,
    legend pos=outer north east,
    xlabel = $i$
]

\addplot[mark = *, only marks,  mark size=0.5pt,  color=red] table[x=n, y =err] {experiments/blockequiherm.tex};
\addlegendentry{$\|r_i(1)\|$}

\addplot[mark = x, color=blue, mark repeat=5] table[x=n,, y =bound] {experiments/blockequiherm.tex};
\addlegendentry{Bound in \Cref{thm:blkvecherm}}

\end{semilogyaxis}
\end{tikzpicture}
\caption{Bound in \Cref{thm:blkvecherm} where $D \in \R^{200,200}$, $\lambda_1 = 1$ and $D_2$ is diagonal with eigenvalues equispaced from $2$ to $101$. The matrix $B_1$ is random and scaled such that $\|E\| = 10^{-2}$. Note that this bound is much tighter than $(\|E\|/gap)^3 = 10^{-6}$.}
\end{figure}
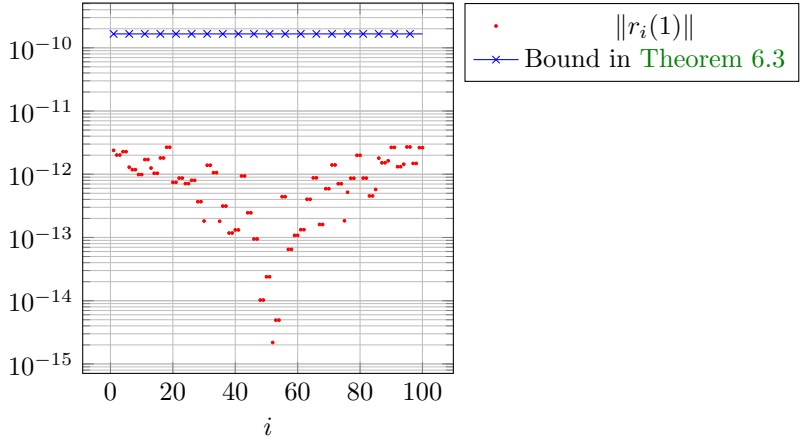

\section{SVD}\label{sec:svd}
In this section, we want to apply the same idea that we have employed for eigenvalues in the setting of singular values. Let $A \in \mathbb{C}^{m \times n}$ and let $A = U\Sigma V^*$ its SVD. The manifold of matrices with the same singular values can be parametrized as
\[ \mathbb{U}(m) \times \mathbb{U}(n) \longrightarrow \C^{m \times n}\]
\[ [P,Q] \longrightarrow P\Sigma Q^*.\]

To find the tangent space, we proceed as before: let $P_1$ and $P_2$ be two small perturbations. First of all, we have to make sure that we are moving along the tangent spaces of $\mathbb{U}(m)$ and $\mathbb{U}(n)$ respectively, which means that $U + P_1$ and $V + P_2$ are orthonormal to the first order, which implies
\[ (U+P_1)^*(U+P_1) = I +O(\|P_1\|^2) \Rightarrow (U^*P_1)^* = -(U^*P_1)\]
\[ (V+P_2)^*(V+P_2) = I +O(\|P_2\|^2)\Rightarrow (V^*P_2)^* = -(V^*P_2).\]
Now, the perturbed SVD is of the form
\[ (U+P_1)\Sigma(V+P_2)^* = A + (P_1U^*)A + A(VP_2^*).\]
As $P_1$ and $P_2$ are generic matrices, the tangent space is described by
\[ SkewHerm(m) \times SkewHerm(n) \longrightarrow \C^{m \times n}\]
\[ [S_1,S_2] \longrightarrow S_1A + AS_2.\]
Again, this matrix satisfies
\[ \mathscr{Re}(u_i^*(S_1A+AS_2)v_i) = 0,\]
which is the first order perturbation of the $i$-th singular value. The number of constraints matches the degrees of freedom of the singular vectors (associated with nonzero singular values), as the right and left singular vectors may be scaled by the same unitary diagonal matrix. 

Let $A \in \mathbb{C}^{m \times n}$ a complex matrix with $m \geq n$ and let $S_1 \in \mathbb{C}^{m \times m}$, $S_2 \in \mathbb{C}^{n \times n}$ be skew-Hermitian matrices. We are interested in studying the singular values of the matrix
\[ A + S_1A + AS_2.\]

\begin{remark}
Suppose that $A$ is diagonal (that is only $A(i,i) \neq 0$ for $i = 1, \dots, n$) and  containing the singular values on the diagonal. In this case, $E$ must have zero diagonal and $S_1$ and $S_2$ can be expressed in terms of $E$ in the following way:
\[S_1(i,j) = \frac{\sigma_j\, E(i,j) + \sigma_i\, E(j,i)}{\sigma_j^2 - \sigma_i^2}, \quad
S_2(i,j) = -\frac{\sigma_j\, E(j,i) + \sigma_i\, E(i,j)}{\sigma_j^2 - \sigma_i^2},
\quad i \neq j, i \le n, j \le n.\]
while
\[ 
S_1(i,j)=\frac{E(i,j)}{\sigma_j},\qquad i>n,\; j\le n.\
\]
The lower-right block of $S_1$ is free.
\end{remark}
Let
\[ A(t) = A + t(S_1A + AS_2)\]
and let $\sigma_i(t), u_i(t), v_i(t)$ be the $i$-th singular value/left singular vector/right singular vector of $A(t)$, i.e.
\[ A(t)v_i(t) = \sigma_i(t)u_i(t), \quad u_i(t)^*A(t) = \sigma_i(t)v_i(t)^*.\]
Using known formulas \cite{Gil2008}, we have 
\begin{align*}
\begin{split}
\frac{d\sigma_i(t)}{dt} &= \mathscr{Re}(u_i(t)^*(S_1A + AS_2)v_i(t))=\\
 &= \mathscr{Re}(u_i(t)^*S_1(\sigma_i(t)u_i(t) - t(S_1A + AS_2)v_i(t)) + (\sigma_i(t)v_i(t)^* - tu_i(t)^*(S_1A + AS_2))S_2v_i(t)) =\\
 &= \mathscr{Re}(- tu_i(t)^*S_1(S_1A + AS_2)v_i(t) - tu_i(t)^*(S_1A + AS_2)S_2v_i(t) =\\
 &= \mathscr{Re}(- tu_i(t)^*(S_1(S_1A + AS_2)+(S_1A + AS_2)S_2)v_i(t))).
\end{split}
\end{align*}
It follows that

\[ | \sigma(1) - \sigma(0)| \leq \frac{1}{2}\| S_1^2A + S_1AS_2 + S_2AS_1 +  AS_2^2 \|_2 \leq \frac12 \|E\|(\|S_1\|+\|S_2\|).\]

As for eigenvalues, using the entire $S_1$ and $S_2$ means including all the differences $\sigma_j^2 - \sigma_i^2$. Hence, if the gap between a certain couple of singular value is small, the bound can be loose.

Different bounds can be derived using previous bounds for eigenvalues.
Assume that the matrix $A \in \C^{m \times n}$, $m \geq n$ is of the form
\[ A = \begin{bmatrix}
    \Sigma \\
    0
\end{bmatrix}\]
where $\Sigma \in \R^{n \times n}$ is diagonal and contains the singular values.
The Jordan-Wielandt matrix

\[ M = \begin{bmatrix}
0 & A\\
A^* & 0
\end{bmatrix}\]
has eigenvalues equal to the singular values of $A$, plus some additional zeros.
This matrix is diagonalized by

\[ Q = \frac{1}{\sqrt2}\begin{bmatrix}
    I_n & I_n & 0\\
    0 & 0 & \sqrt2 I_{m-n}\\
    I_n & -I_n & 0
\end{bmatrix}.\]
Furthermore, if $A$ is perturbed as
\[ A +E = A + S_1A+AS_2\]
the matrix $M$ is perturbed as
\[ M + \begin{bmatrix}
    0 & E\\
    E^* & 0
\end{bmatrix}= \begin{bmatrix}
0 & A+E\\
(A+E)^* & 0
\end{bmatrix}.\]
Finally, if we multiply this matrix by $Q^*$ from the left and $Q$ from the right, we get
\[ Q^* \begin{bmatrix}
0 & A+E\\
(A+E)^* & 0
\end{bmatrix}Q = \begin{bmatrix}
    \Sigma + \frac{E_1+E_1^*}{2}&  \frac{-E_1+E_1^*}{2} & \frac{1}{\sqrt{2}}E_2^* \\
     \frac{E_1-E_1^*}{2} & -\Sigma- \frac{E_1+E_1^*}{2} & -\frac{1}{\sqrt{2}}E_2^* \\
    \frac{1}{\sqrt{2}}E_2 & -\frac{1}{\sqrt{2}}E_2 &0
\end{bmatrix}\]
where 
\[ E_1 =  S_1(1:n,1:n)\Sigma+\Sigma S_2, \quad E_2 = S_1(n+1:m, 1:n)\Sigma\]
are the first and last rows of $E$. We have reduced the original problem to the case of diagonal matrix plus off-diagonal perturbation.

Note that a block of zeros appears, which suggests that, if we only perturb the last rows of $A$ (the rows of zeros), then the perturbation of a singular value does not depend on the gap between the other singular values, but only on the gap with $0$, i.e. the singular value itself. This can be checked in the following way. Consider
\[ A +E = \begin{bmatrix}
    \Sigma \\ E_2
\end{bmatrix}\]
then
\[ (A+E)^*(A+E) = \Sigma^2 + E_2^*E_2.\]
If we call $\Tilde{\sigma}_i$ the $i$-th singular value of $A+E$, by Bauer-Fike we have
\[ |\tilde{\sigma}_i^2 - \sigma_i^2| \leq \|E_2^*E_2\| = \|E_2\|^2.\]
This implies that
\[ |\tilde{\sigma}_i - \sigma_i| \lesssim \frac{\|E_2\|^2}{\sigma_i} = \frac{\|E\|^2}{\sigma_i}.\]

We can compute the matrix $B$ that we have used in \Cref{sec:eigen} in this particular case. As $M$ is symmetric, $B$ will be skew-symmetric, and we can partition it as

\[ B = \begin{bmatrix}
    B_{1} & B_{2} & B_{3}\\
    B_{2}^* &  B_{1} & B_{3} \\
    B_3^* & B_3^* & 0
    
\end{bmatrix}\]
where
\[ B_{1}(i,j) = \frac{1}{2}\frac{E(i,j)+\bar{E}(j,i)}{\sigma_i - \sigma_j}\quad B_2(i,j) = \frac{1}{2}\frac{\bar{E}(j,i) - E(i,j)}{\sigma_i + \sigma_j} \quad B_3(i,j) = \frac{1}{\sqrt{2}}\frac{E(n+1+i,j)}{\sigma_i}.\]
These expressions agree with the asymptotic expansions. If we assume that $E$ is real, in the tangent and that $E_2 = 0$, then the asymptotic expansion is 

\begin{equation}\label{eq:asympsingular}
\sigma_i(D+E)
=
\sigma_i
+
\frac{1}{2\sigma_i}
\left[
\sum_{j\neq i} E(j,i)^2
+
\sum_{j\neq i}
\frac{(\sigma_iE(j, i)+\sigma_jE(i,j))^{2}}{\sigma_i^{2}-\sigma_j^{2}}
\right]
+
\mathcal{O}(\|E\|^3).
\end{equation}
If $E_1$ is symmetric, \Cref{eq:asympsingular} is equal to

\[  \frac{1}{2\sigma_i}
\left[
\sum_{j\neq i}
\frac{(\sigma_j+\sigma_i+\sigma_i-\sigma_j)\,E(i,j)^{2}}{\sigma_i-\sigma_j}
\right] = \sum_{j\neq i}
\frac{\,E(i,j)^{2}}{\sigma_i-\sigma_j}.\]
If $E_1$ is antisymmetric, \Cref{eq:asympsingular} is equal to
\[  \frac{1}{2\sigma_i}
\left[
\sum_{j\neq i}
\frac{(2\sigma_i^2 -2\sigma_i\sigma_j)E(i,j)^2}{\sigma_i^2-\sigma_j^2}
\right] = 
\sum_{j\neq i}
\frac{E(i,j)^2}{\sigma_i+\sigma_j}\]

\section{Jordan blocks}\label{sec:jordan}

Consider the nilpotent Jordan block

\[ J = \begin{bmatrix}
    0 & 1 & \\\
    & 0 & 1 &  \\
    & & 0 & 1 \\
    & & & \ddots & \ddots
\end{bmatrix} \in \C^{n \times n}\]
and a perturbation matrix $E$ of the form 
\[ E = JB- BJ\]
where $B \in \C^{n \times n}$.
Note that $E$ is such that the sum of the entries of each subdiagonal (including the main diagonal) is $0$. This, in particular, excludes the case of the entry in position $(n,1)$ being non-zero, which typically gives the result revealing the generic $\mathcal{O}(\varepsilon^{1/n})$ perturbation, which is the most sensitive behavior an eigenvalue of an $n\times n$ matrix can exhibit. Given a matrix $E$ that satisfies the condition of zero sum of subdiagonals, the matrix $B$ is unique up to an upper triangular Toeplitz matrix (again, $n^2 - n$ degrees of freedom).

For the Jordan block case, we give only asymptotic expansions for the eigenvalues.

 \begin{theorem}
      Let $J \in \C^{n \times n}$ the Jordan block of dimension $n \times n$ with eigenvalues equal to $0$. Let $E \in \C^{n \times n}$ be equal to
      \[ E = JB-BJ\]
      for some $B \in \C^{n \times n}$. Equivalently, $E$ has all subdiagonals summing to zero (including the main diagonal). 
      If 
      \[ \sum_{r=1}^{n-1}(-1)^rE(n,r+1)E(r,1)\neq 0\]
      then the perturbation of the eigenvalues of $J + \varepsilon E$
      has order $\mathcal{O}(\varepsilon^{2/n})$.
  \end{theorem}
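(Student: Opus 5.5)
The plan is to work with the characteristic polynomial
\[ p_\varepsilon(z) = \det(zI - J - \varepsilon E) = z^n + c_1(\varepsilon)z^{n-1} + \dots + c_n(\varepsilon), \]
and show two things. First, every coefficient satisfies $c_k(\varepsilon) = \mathcal{O}(\varepsilon^2)$. Second, the constant term satisfies $c_n(\varepsilon) = \pm a\,\varepsilon^2 + \mathcal{O}(\varepsilon^3)$ with $a = \sum_{r=1}^{n-1}(-1)^rE(n,r+1)E(r,1) \neq 0$. The root estimates then follow from elementary bounds. Each $c_k(\varepsilon)$ is a polynomial in $\varepsilon$ with $c_k(0)=0$ for $k\ge 1$, since $p_0(z)=z^n$.

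\textbf{First-order terms vanish.} By Jacobi's formula, the $\varepsilon$-linear part of $p_\varepsilon(z)$ is $-\operatorname{tr}(\operatorname{adj}(zI-J)E)$. The matrix $(zI-J)^{-1}$ is upper triangular Toeplitz with $(i,j)$ entry $z^{-1-(j-i)}$ for $j \ge i$. Hence $\operatorname{adj}(zI-J)(i,j) = z^{n-1-(j-i)}$ for $j\ge i$, and $0$ otherwise.

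It follows that the coefficient of $z^{n-k}$ in $\operatorname{tr}(\operatorname{adj}(zI-J)E)$ is $\sum_{j-i=k-1}E(j,i)$. This is exactly the sum of the entries of the $(k-1)$-th subdiagonal of $E$ (the main diagonal when $k=1$). By the characterization of $E = JB - BJ$ stated before the theorem, these sums are all zero. Therefore $c_k(\varepsilon) = \mathcal{O}(\varepsilon^2)$ for every $k$. I would note for later that $E(n,1)=0$, since it is alone on its subdiagonal.

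\textbf{Second-order term of the constant.} We have $c_n(\varepsilon) = (-1)^n\det(J+\varepsilon E)$. In the Leibniz expansion, the $\varepsilon^2$ coefficient collects permutations that use $n-2$ superdiagonal ones $(i,i+1)$ and two entries of $E$. If the two rows not served by the superdiagonal are $r<n$ and $n$, the two columns left uncovered are $1$ and $r+1$. The admissible terms are therefore $E(r,1)E(n,r+1)$ and $E(r,r+1)E(n,1)$. The latter vanishes because $E(n,1)=0$. (One must also check that choosing the two rows as $r<s<n$ yields no valid permutation, since column $s+1$ would then need row $n$.)

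Computing the sign of the permutation that sends $r\mapsto 1$, $n\mapsto r+1$, and $i\mapsto i+1$ otherwise is a cycle-counting exercise. It should give a relative sign $(-1)^r$, up to a global sign. Hence $c_n(\varepsilon) = \pm a\varepsilon^2 + \mathcal{O}(\varepsilon^3)$. This sign bookkeeping, together with confirming that no other row pairs contribute, is the step I expect to be the main (if routine) obstacle.

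\textbf{Conclusion.} For a root $z$ of $p_\varepsilon$, the Cauchy-type bound gives $|z| \le \max_k (n|c_k(\varepsilon)|)^{1/k} = \mathcal{O}(\max_k \varepsilon^{2/k})$. For small $\varepsilon$ the largest of these is $\varepsilon^{2/n}$, so every eigenvalue satisfies $|z_j| \le C\varepsilon^{2/n}$. Conversely, $\prod_j |z_j| = |c_n(\varepsilon)| \ge \tfrac{|a|}{2}\varepsilon^2$ for small $\varepsilon$. Combined with the upper bound on the other $n-1$ roots, this forces each $|z_j| \ge \frac{|a|}{2C^{n-1}}\varepsilon^{2/n}$. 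So every eigenvalue perturbation is exactly of order $\varepsilon^{2/n}$.
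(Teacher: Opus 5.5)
\textbf{The gap is the sign computation you deferred.} Your reduction to the characteristic polynomial is sound, and so is the Jacobi-formula argument that every $c_k(\varepsilon)$ is $\mathcal{O}(\varepsilon^2)$. The same holds for the observation $E(n,1)=0$, the identification of the contributing permutations, and the Cauchy-bound/product-of-roots endgame.

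The sign does not come out as you expect. The permutation with $\sigma(r)=1$, $\sigma(n)=r+1$ and $\sigma(k)=k+1$ otherwise is the product of the disjoint cycles $(1\,2\,\cdots\,r)$ and $(r+1\,\cdots\,n)$. So $\operatorname{sgn}\sigma=(-1)^{(r-1)+(n-r-1)}=(-1)^n$ for \emph{every} $r$. Hence
\[ c_n(\varepsilon)=\varepsilon^2\sum_{r=1}^{n-1}E(r,1)E(n,r+1)+\mathcal{O}(\varepsilon^3), \]
with no alternating sign. The hypothesis $\sum_r(-1)^rE(n,r+1)E(r,1)\neq0$ says nothing about this coefficient.

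In fact the sharp-order conclusion is false under the stated hypothesis. For $n=3$ take
\[ B=\begin{bmatrix}0&0&0\\1&0&0\\1&-1&0\end{bmatrix},\qquad E=JB-BJ=\begin{bmatrix}1&0&0\\1&-2&0\\0&-1&1\end{bmatrix}. \]
The alternating sum equals $2$, yet $\det(zI-J-\varepsilon E)=(z-\varepsilon)^2(z+2\varepsilon)$. The eigenvalues are therefore $\varepsilon,\varepsilon,-2\varepsilon$, of order $\varepsilon$ rather than $\varepsilon^{2/3}$. The paper's proof asserts the same alternating coefficient without computing the signs, so the slip sits in the statement itself. Doing the cycle count you postponed would have exposed it.

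\textbf{What survives is substantial.} Your first step together with the Cauchy bound gives $|z_j|\le C\varepsilon^{2/n}$ for every $E$ in the tangent space, with no nondegeneracy assumption. So under the literal reading of $\mathcal{O}(\cdot)$ as an upper bound, you have proved the statement and the hypothesis is superfluous. With the corrected condition $\sum_{r=1}^{n-1}E(r,1)E(n,r+1)\neq0$, your product-of-roots argument gives the two-sided order $\varepsilon^{2/n}$ for every eigenvalue.

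Your route is also more self-contained than the paper's. The paper invokes \cite[Thm~4]{Bur1992} for the vanishing of the $\varepsilon$-linear terms and reads the exponent off a Newton diagram. You instead derive the vanishing directly from $\operatorname{adj}(zI-J)$ and the subdiagonal-sum characterization, and replace the Newton diagram by an elementary root bound with explicit constants. You also explicitly dispose of the cyclic-shift permutation via $E(n,1)=0$, which the paper leaves implicit.
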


  \begin{proof}
Define $p_{\varepsilon}(t)$ the characteristic polynomial of $J + \varepsilon E$. 
According to \cite[Thm 4]{Bur1992} applied to our case, in the expansion of $p_{\varepsilon}(t)$ in terms of $\varepsilon$, the linear term in $\varepsilon$ vanishes, that is
\[ p_{\varepsilon}(t) = t^n + \varepsilon^2 p^{(2)}(t) + \mathcal{O}(\varepsilon^3).\]
 In order to determine the magnitude of the perturbation of the eigenvalues, we have to determine whether the term $\varepsilon^2t^0$ in the polynomial $p_{\varepsilon}(t)$ vanishes or not. We know that the coefficient of $t^0$ in the characteristic polynomial is equal to
 \[ det(J + \varepsilon E).\]
 If we rewrite the determinant as
 \[ det(J + \varepsilon E)= \sum_{\sigma \in S_m}sgn(\sigma) \prod_{i = 1}^n (J(i,\sigma(i))+\varepsilon E(i,\sigma(i)))\]
 we see that, in order for the $\varepsilon^2$ to appear, we need that
 \[ J(i, \sigma(i)) = 1\]
 at least $n-2$ times. The only possibilities for $\sigma$ are the permutations in which there is an index $r$ such that 
 \[ \sigma(r) = 1\]
 \[ \sigma(n) = r+1\]
 while $\sigma(k) = k+1$ for all other indices.
 Finally, the coefficient of $\varepsilon^2t^0$ is
 \begin{equation}\label{eq:condjordan} \sum_{r=1}^{n-1}(-1)^rE(n,r+1)E(r,1).\end{equation}
  To conclude we use the technique of the Newton diagram \cite{Ma1998}, \cite[\S 8.3]{Bri2012}. In our case, the diagram is reported in \Cref{fig:newton}, where we see that the first exponent that leads to cancellation is $2/n$ if \Cref{eq:condjordan} is different from zero.
\end{proof}

 \begin{figure}\label{fig:newton}
\centering
 \begin{tikzpicture}[scale=0.8, every node/.style={font=\small}]

  \def\H{4}

  \def\nx{5}

  \draw[->] (-1,0) -- (\nx+1,0) node[right] {$x$};
  \draw[->] (0,-0.5) -- (0,\H+0.5) node[above] {$y$};

  \fill (0,2) circle (1.7pt) node[above left] {\tiny $(0,2)$};
  \fill (1,2) circle (1.7pt) node[above] {\tiny $(1,2)$};

  \node at (2,2) {$\cdots$};

  \fill (\nx-2,2) circle (1.7pt) node[above] {\tiny $(n-2,2)$};

  \fill (\nx,0) circle (1.7pt) node[below right] {\tiny $(n,0)$};

  \draw[very thick] (\nx,0) -- (0,2);

  \fill[red,opacity=0.3]
    (0,2)
    -- (\nx,0)
    -- (\nx-2,2)
    -- (\nx-4,\H)
    -- (0,\H)
    -- cycle;

  \draw[red!60!black,dashed]
    (\nx,0) --(\nx-2,2) --(\nx-4,\H) -- (0,\H);

\end{tikzpicture}
\caption{Netwon diagram in the generic case where the coefficient of $\varepsilon^2t^0$ is not zero. The slope of the solid line indicates the power of $\varepsilon$ in the perturbation of the eigenvalues ($2/n$).}
\end{figure}
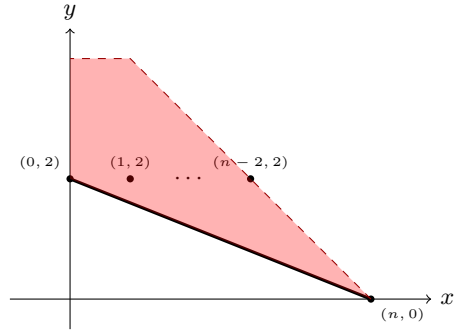

  Note that $2/n$ is the lowest coefficient that a line can have in the Newton diagram as the point $(n,0)$ is always in the diagram. As a consequence, $\mathcal{O}(\varepsilon^{2/n})$ is the largest perturbation an eigenvalue may have. In \Cref{fig:jordan} we plot the perturbed eigenvalues of nilpotent Jordan block.

\begin{figure}\label{fig:jordan}
\centering
\begin{tikzpicture}

\begin{axis}[
scale = 0.8,
    width=0.6\textwidth,
    height=0.6\textwidth,
    grid=both,
]

\addplot[mark = o, color=red, only marks] table[x=real, y = imag] {experiments/jordan.tex};

\addplot[
    domain=0:2*pi,
    samples=200,
    color = blue
]
({10^(-(3/50)) *cos(deg(x))},{10^(-(3/50)) * sin(deg(x))});

\end{axis}
\end{tikzpicture}
\caption{Perturbed eigenvalues for a nilpotent Jordan block of size $100 \times 100$ and a perturbation $E$ in the tangent with norm $\|E\| = 10^{-3}$. The circle has radius $\|E\|^{2/100}$.}
\end{figure}
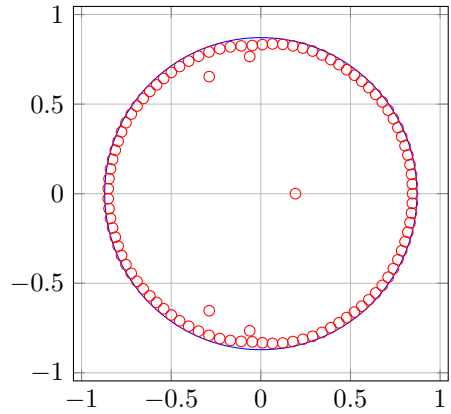

\section{Conclusion}

  We have analysed the perturbation of eigenvalues and eigenvectors of a matrix $A$ under an additive perturbation $E$ in the tangent space of isospectral matrices, that is $E = AB-BA$. We have shown how asymptotic expansions of eigenvectors and eigenvalues can be simply written in terms of $E$ and $B$ and we have highlighted how $B$ is closely related to the first order perturbation of eigenvectors. Beyond asymptotic expansions, we have provided high-order bounds on approximations of eigenvectors. Moreover, we have generalized the quadratic bound on eigenvalues proposed in \cite{Li2005} to this setting, which is a more general setting than Hermitian block off-diagonal perturbations. We have extended this analysis to the block-diagonal case, providing bounds on perturbed eigenspaces, which we further improved under the hypothesis of Hermiticity. Finally, we have extended the analysis to singular vectors, singular values and eigenvalues of Jordan blocks.

\bibliographystyle{siamplain}
\bibliography{biblio}
\end{document}